\documentclass[12pt, reqno]{amsart}

\usepackage{amssymb,amsmath,amsthm,graphicx,enumitem,systeme, array,enumitem,commath,mathtools,multicol,url}
\usepackage{setspace,blindtext}
\usepackage{varioref}
\usepackage[colorlinks=true,linkcolor=blue]{hyperref}
\usepackage[capitalise]{cleveref}
\usepackage{blindtext}
\usepackage{fancyhdr}
\usepackage{pgfplots}
\usepackage{geometry}
\usepackage{mathabx}

\usetikzlibrary{calc,angles,quotes}

\usepackage{xcolor}
\DeclareMathOperator{\vol}{vol}
\newcommand{\R}{\mathbb{R}}

\newcommand{\Sph}{\mathbb{S}}

\newcommand{\siglow}{\underline{\sigma}}
\newcommand{\sighigh}{\overline{\sigma}}

\usepackage{caption}
\allowdisplaybreaks 

\theoremstyle{plain}
\newtheorem{theorem}{Theorem}[section]
\newtheorem{lemma}[theorem]{Lemma}

\newtheorem{proposition}[theorem]{Proposition}

\theoremstyle{definition}

\newtheorem{remark}[theorem]{Remark}

\title[Exponential bounds for the spherical Blaschke-Lebesgue problem]{Exponential bounds for the spherical Blaschke-Lebesgue problem}

\author{Abigail Hall}
\address{Department of Mathematics, University of Manitoba, Winnipeg, MB, R3T 2N2, Canada}
 \email{halla6@myumanitoba.ca}

\author{Andriy Prymak}
\address{Department of Mathematics, University of Manitoba, Winnipeg, MB, R3T 2N2, Canada}
 \email{prymak@gmail.com}

\author{Chanatip Sujsuntinukul}
\address{Department of Mathematics, The University of Hong Kong, Pokfulam, Hong Kong}
\email{chanatip@connect.hku.hk}

\subjclass[2020]{Primary  52A20; Secondary 52A40, 28A75, 49Q20}

\keywords{Blaschke-Lebesgue problem, spherical convex geometry, bodies of constant width, complete spherical convex bodies, spherical duality, effective radius, Gaussian measure, autocorrelation, convex cones}

\begin{document}

\maketitle

\begin{abstract}
The Blaschke-Lebesgue theorem states that the Reuleaux triangle has the smallest area among planar convex bodies of a fixed constant width. We study how small bodies of constant width can be on the unit sphere $\mathbb S^n$ when $n$ is large. For a spherical convex body $K\subset \mathbb S^n$ of constant width $w\in(0,\pi)$, its relative effective radius is \[
  \left(\frac{\mu_n(K)}{\mu_n(\mathbb B^n(w/2))}\right)^{1/n}, \] where $\mu_n$ is the spherical $n$-measure and $\mathbb B^n(w/2)$ is a geodesic ball of radius $w/2$. Let $\sigma_n(w)$ be the infimum of the relative effective radius over all spherical bodies of constant width $w$. Define $\underline{\sigma}(w)=\liminf_{n\to\infty}\sigma_n(w)$ and $\overline{\sigma}(w)=\limsup_{n\to\infty}\sigma_n(w)$. For each fixed $w\in(0,\pi)$, we prove non-trivial bounds \[
  0<\sigma_{\ell}(w)\le \underline{\sigma}(w)\le \overline{\sigma}(w)\le \sigma_u(w)<1, \] where $\sigma_\ell(w)$ and $\sigma_u(w)$ are defined in terms of $w$ either explicitly or through a root of a quartic equation. The upper bounds are obtained by constructing small spherical bodies of constant width: for $w\le\pi/2$ by a spherical version of the recent Arman-Bondarenko-Nazarov-Prymak-Radchenko Euclidean example, and for $w>\pi/2$ by spherical duality. The lower bounds combine a spherical adaptation of Schramm’s illumination argument with a Gaussian autocorrelation method for the associated convex cones. 
  
\end{abstract}

\section{Introduction}

The classical Blaschke-Lebesgue problem asks which convex bodies of a fixed constant width have the least volume.  In the Euclidean plane, Blaschke and Lebesgue independently proved that the area minimizer is the Reuleaux triangle \cite{Blaschke,Lebesgue}.  In higher-dimensional Euclidean space the problem remains open already in dimension three, where the expected minimizers are the Meissner bodies; see, for example, \cite{MMO} for background and further references.  A useful high-dimensional normalization is the effective radius $r(K)$: for a body $K\subset\mathbb R^n$ of constant width $2$, let $r(K)$ be defined by $\vol_n(K)=\vol_n(r(K)\mathbb B^n_E)$, where $\vol_n$ is the Lebesgue measure in $\mathbb{R}^n$ and $\mathbb B^n_E$ is the unit Euclidean ball in $\mathbb R^n$. Further, let $r_n$ be the minimum of $r(K)$ over all bodies of constant width $2$ in $\mathbb R^n$. Evidently, the  isodiametric inequality gives $r_n\le r(\mathbb B^n_E)=1$. In 1988, Schramm proved the non-trivial lower bound
\[
   r_n\ge \sqrt{3+\frac{2}{n+1}}-1,
\]
and asked whether $r_n$ is bounded away from $1$ in all sufficiently large dimensions \cite{SchrammVol}.  This was recently answered affirmatively by Arman, Bondarenko, Nazarov, Prymak and Radchenko: they constructed, for all sufficiently large $n$, Euclidean bodies of constant width $2$ with effective radius less than $0.891$ \cite{original}.  Thus in the Euclidean high-dimensional case there exist bodies of constant width with volume exponentially smaller than that of the ball with the same width, while the exact asymptotic behavior remains unknown and is likely a very hard question to answer.

In this work, we consider the spherical setting. We regard \(\Sph^n\) as the unit sphere in \(\R^{n+1}\), equipped with its geodesic distance and spherical \(n\)-measure \(\mu_n\). Although \(\Sph^n\) is a Riemannian manifold and therefore retains many familiar geometric notions, the spherical setting presents genuinely new features. Let $\mathcal C_n(w)$ denote the class of spherical convex bodies of constant width $w\in(0,\pi)$ in $\Sph^n$.  We use the standard definition by supporting hemispheres and the thickness of the narrowest containing lune; see \Cref{sec:basic-spherical-geometry} for details. The terminology in the literature is not completely uniform: spherical constant width may also be formulated using normal directions, complete bodies, or constant diameter.  The equivalences and distinctions among these viewpoints are surveyed by Lassak~\cite{LM}, and the particular complete-body and constant-diameter equivalences needed here are established in \cite{HW,LM,LM2}.

For $K\in\mathcal C_n(w)$, we define its relative effective radius by
\[
   \operatorname{rer}_{w}(K):=\left(\frac{\mu_n(K)}{\mu_n(\mathbb B^n(w/2))}\right)^{1/n},
\]
where $\mathbb B^n(w/2)$ is any geodesic ball of radius $w/2$ in $\Sph^n$.  We then set
\[
   \sigma_n(w):=\inf_{K\in\mathcal C_n(w)}\operatorname{rer}_w(K),
   \qquad
   \siglow(w):=\liminf_{n\to\infty}\sigma_n(w),
   \qquad
   \sighigh(w):=\limsup_{n\to\infty}\sigma_n(w).
\]
The normalization is chosen so that a ball of radius $w/2$ has relative effective radius $1$. We note that existence of the limit $\lim_{n\to\infty} \sigma_n(w)$ is not known, which is also the case for the Euclidean situation (see \cite[Problem~2]{SchrammVol}).

On $\Sph^2$, the spherical Blaschke-Leichtweiss theorem and its subsequent refinements show that the least-area body of constant width $w$ is a spherical Reuleaux triangle when $0<w\le \pi/2$, whereas, when $\pi/2\le w<\pi$, it is the spherical polar of a triangle of width $\pi-w$ \cite{BezdekBL,BS,FS,Leichtweiss}.
For $w=\pi/2$ this body is the spherical orthant
   $\mathcal{O}_2:=\Sph^2\cap\mathbb{R}_+^3$,
so the exact minimizer has area $\pi/2$ which is $1/8$ of the area of the whole $\Sph^2$. Note that the spherical orthant $\mathcal{O}_n:=\Sph^n\cap\mathbb{R}_+^{n+1}$ belongs to $\mathcal C_n(\pi/2)$ and has the area $1/2^{n+1}$ of the area of the whole $\Sph^n$, so $\sighigh(\pi/2)\le 1/{\sqrt{2}}$. Bezdek~\cite[Conjecture~2.17]{BezdekBL} conjectured that
for every $n\ge3$, the spherical orthant
$\mathcal{O}_n$ minimizes $\mu_n(K)$ among all $K\in\mathcal C_n(\pi/2)$.

Before stating our work, we first define the explicit bases appearing in the bounds. For the lower bound, first let
\begin{align*}
    \sigma_{\ell}^{\text{Sch}}(w):=\begin{cases}
\displaystyle
\frac{2\sqrt{\cos w}\bigl(\sqrt{1+2\cos w}-\sqrt{\cos w}\bigr)}{1+\cos w}
&\text{if }0<w\le\pi/2,\\[2.2ex]
\displaystyle
\frac{2\sqrt{-\cos w}}{1-\cos w}
&\text{if }\pi/2<w<\pi.
\end{cases}
\end{align*}
Next, define 
\[R(w):=\begin{cases}
    1+2\cot w &\text{if $0<w\le\pi/2$}, \\
    1 &\text{if $\pi/2<w<\pi$},
\end{cases}\]
and $D(w):=\sqrt{1+(R(w))^2}$. Let $\xi(w)$ be the smallest positive root of the equation
\[ (R(w))^2\xi^4+\xi^2-2(D(w))\xi+1=0.\]
Then let
\[\sigma_{\ell}^{\text{G}}(w):=\frac{1}{\sin(w/2)}\sin\left(\sin^{-1}(\xi(w))+\max\left\{0, \frac{w-\pi/2}{2}\right\}\right).\]
Finally, put $\sigma_{\ell}(w):=\max\{\sigma_{\ell}^{\text{Sch}}(w), \sigma_{\ell}^{\text{G}}(w)\}$.

For the upper bound, if $0<w\le\pi/2$, set
$
   \theta_-(w):=\cos^{-1}\sqrt{\cos w}$
and let $L_-(w)$ be the supremum of $(\alpha^{2/3}+\beta^{2/3})^{3/2}$ over all $(\alpha,\beta)\in\mathbb R_+^2$ satisfying
\begin{equation*}
   \sqrt{1-\alpha^2-\beta^2}\cos\theta_-(w) - \beta\sin\theta_-(w) \geq \cos w.
\end{equation*}
If $\pi/2<w<\pi$, set
$
   \theta_+(w):=\cos^{-1}\sqrt{-\cos w}$
and
\begin{equation*}
   L_+(w):=
   \sup_{0\le\varphi\le\pi/2}
   \frac{\cot\theta_+(w)\bigl(\cos^{2/3}\varphi+\sin^{2/3}\varphi\bigr)^{3/2}}
        {\sqrt{\cos^2\varphi+\cot^2\theta_+(w)}} .
\end{equation*}
Finally define
\begin{equation*}
\sigma_u(w):=
\begin{cases}
\displaystyle
\frac{L_-(w)}{2\sin(w/2)}
&\text{if } 0<w\le\pi/2,\\[2.2ex]
\displaystyle
\frac{L_+(w)}{2\sin(w/2)}
&\text{if }\pi/2<w<\pi.
\end{cases}
\end{equation*}
The numerical plot of $\sigma_{\ell}$ and $\sigma_u$ is shown in Figure \ref{plot}. We now state our main result.

\begin{theorem}\label{thm:main-intro}
For each fixed $w\in(0,\pi)$, 
we have
\[0<\sigma_{\ell}(w)\le \siglow(w)\le \sighigh(w)\le \sigma_u(w)<1.\]
\end{theorem}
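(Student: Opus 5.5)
The theorem splits into four inequalities. Since $\siglow\le\sighigh$ is trivial, the work is the upper bound $\sighigh(w)\le\sigma_u(w)<1$ and the lower bound $0<\sigma_\ell(w)\le\siglow(w)$. I would prove them separately, and within each, treat $w\le\pi/2$ and $w>\pi/2$ separately, as the definitions of $\sigma_u$ and $\sigma_\ell$ suggest.

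\emph{Upper bound, $w\le\pi/2$.} I would adapt the Arman--Bondarenko--Nazarov--Prymak--Radchenko construction to the sphere.
\begin{enumerate}
\item Split $\R^{n+1}=\R^{1}\oplus\R^{k}\oplus\R^{m}$, with $k,m$ proportional to $n$. Take a set $X\subset\Sph^n$ of diameter $w$ built from two "caps" tilted by the angle $\theta_-(w)=\cos^{-1}\sqrt{\cos w}$. This angle is chosen so that two points at angle $\theta_-$ from a pole, on orthogonal great circles, are exactly at distance $w$.
\item Complete $X$ to a body $K$ of constant width $w$. Every completion lies in the spherical ball-hull intersection $\bigcap_{x\in X}\mathbb B^n(x,w)$, using the complete-body equivalence from \cite{HW,LM,LM2}.
\item Bound $\mu_n(K)$ by Laplace's method. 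A point of $K$ in coordinates with radial parts $(\alpha,\beta)$ in the two blocks must satisfy the displayed constraint $\sqrt{1-\alpha^2-\beta^2}\cos\theta_--\beta\sin\theta_-\ge\cos w$, up to the choice of proportion.
\item The volume of the set of points with given $(\alpha,\beta)$ is $\approx \alpha^{k}\beta^{m}$. Optimizing over the split $k/n$ turns the exponential growth rate into $\sup(\alpha^{2/3}+\beta^{2/3})^{3/2}=L_-(w)$; the $2/3$ exponents are Hölder duals arising from that optimization.
\item Normalizing by $\mu_n(\mathbb B^n(w/2))^{1/n}\to\sin(w/2)$ gives the ratio $L_-/(2\sin(w/2))$.
\end{enumerate}

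\emph{Upper bound, $w>\pi/2$.} I would use spherical duality: the polar $K^\circ$ of a body in $\mathcal C_n(\pi-w)$ lies in $\mathcal C_n(w)$. I would apply this to a suitably chosen body of width $\pi-w$, or rather take $K$ to be the polar of a cone-like set, and compute its measure through its Gaussian or radial description. The parametrization by $\varphi$ in $L_+$ reflects the two blocks.

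\emph{Strictness $\sigma_u<1$.} I would check this directly, by showing that the sup is attained in the interior and beats the ball value $2\sin(w/2)$. The ball corresponds to a degenerate choice of $(\alpha,\beta)$, and a first-order perturbation improves on it.

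\emph{Lower bound, $\sigma^{\rm Sch}_\ell$.} I would follow Schramm's illumination argument. For $K$ of constant width $w$ with inradius $r$ and circumradius $R$, concentric, one has $r+R=w$. The key estimate comes from covering $\Sph^n$ (or the circumball) by rotated copies of $K$, or from a Brunn–Minkowski-type estimate via random illumination directions. This yields $\mu_n(K)\ge c\,\mu_n(\mathbb B(\rho))$ with $\rho$ explicit. The spherical trigonometry of the extremal configuration (a point at distance $w$ from a cap) gives the two formulas in the cases $\cos w\gtrless0$.

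\emph{Lower bound, $\sigma^{\rm G}_\ell$.} For the cone $C=\R_+K$, I would use the Gaussian measure, where $\gamma(C)$ equals the normalized $\mu_n(K)$. The width condition forces $C$ and $-C$ rotated to interact. Specifically, for a random Gaussian vector $g$, the autocorrelation $\gamma(C\cap(C+t))$ for suitable translates, combined with the diameter constraint, gives an inequality leading to the quartic in $\xi$. The root $\xi(w)$ is then the sine of the effective angular radius, and the shift by $(w-\pi/2)/2$ accounts for duality when $w>\pi/2$.

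The main obstacle is this Gaussian autocorrelation step, specifically deriving exactly the quartic constraint and verifying that the root exceeds $0$, which gives positivity. Asymptotic precision in Laplace's method is routine by comparison. Positivity $\sigma_\ell>0$ already follows from $\sigma^{\rm Sch}_\ell>0$, which holds for all $w\in(0,\pi)$ except possibly at $w=\pi/2$; there I would rely on $\sigma^{\rm G}_\ell$.
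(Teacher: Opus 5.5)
Your outline has the right four-part architecture, but the two lower bounds lack their actual mechanisms, and what you sketch would not reach the stated constants. For $\sigma_\ell^{\text{Sch}}$, concentric in- and circumradius with $r+R=w$ plus Jung only gives the inball bound. Its limit as $w\to0^+$ is $2-\sqrt2<\sqrt3-1=\lim_{w\to0^+}\sigma_\ell^{\text{Sch}}(w)$, and neither covering by rotated copies nor a Brunn--Minkowski step fixes this. The missing idea is Schramm's symmetrization. Place the Jung circumball $\mathbb B^n(\alpha_n)$ of a set $A$ with $\mathrm{diam}\,A\le\delta$ at $p$, let $T$ be the reflection fixing $p$, and show $A^\gamma\cup(TA)^\gamma\supset\mathrm{int}\,\mathbb B^n(\beta)$. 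This holds because a point at distance $\varepsilon$ from $p$ lying outside both would force two points of $A$ at distance at least $2\sin^{-1}\bigl((\cos\alpha\cos\varepsilon-\cos\gamma)/\sin\varepsilon\bigr)$. Hence $\mu_n(A^\gamma)\ge\frac12\mu_n(\mathbb B^n(\beta))$. Use this with $A=K$, $\gamma=\delta=w$ for $w<\pi/2$, and with $A=K^*$, $\gamma=\pi/2$, $\delta=\pi-w$ for $w>\pi/2$; then $A^\gamma=K$ by completeness, respectively by duality.

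For $\sigma_\ell^{\text{G}}$, which you flag as the obstacle, the key is the Moreau decomposition $z=u-v$ with $u\in C$, $v\in C^*$, $u\perp v$. The relevant interaction is between $C$ and $C^*$, not $C$ and $-C$. At $w=\pi/2$, $C^*=C$ gives $u+C\subset C\cap(z+C)$. So $F(z)\ge(2\pi)^{-n}e^{-\|z\|^2/2}\int_C e^{-\|q\|^2-\langle u+v,q\rangle}\,dq$ with $\|u+v\|=\|z\|$. Integrate this against $\int F=\kappa^2$, keeping only directions $q$ with $\langle (u+v)/\|z\|, q/\|q\|\rangle\le\vartheta$ (at a loss of $q_n(\vartheta)\approx(1-\vartheta^2)^{n/2}$). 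This gives $\kappa^2\ge(\kappa-q_n(\vartheta))L_n(\vartheta)$ and the bootstrap $\xi\ge(\sqrt2+\sqrt{1-\xi^2})^{-1}$, which is the quartic. For $w<\pi/2$ you also need a quantitative substitute for self-duality. From $K^*=K_{\pi/2-w}$ and iterated Moreau decompositions one finds $a\in C$ with $v+a\in C$ and $\|a\|\le\|v\|\cot w$; this is where $R(w)=1+2\cot w$ and $D(w)$ come from. For $w>\pi/2$ the shift $(w-\pi/2)/2$ is not a duality effect as such. Set $t=(w-\pi/2)/2$, show $M:=(K^*)_t\in\mathcal C_n(\pi/2)$ and $K=M_t$, and apply L\'evy's spherical isoperimetric inequality.

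On the upper-bound side you land on the right formulas, but the mechanism you describe does not produce them. The body is not built on a fixed split $\R^1\oplus\R^k\oplus\R^m$: rotationally symmetric caps at angle $\theta_-$ have diameter $2\theta_->w$. What gives the generating set diameter exactly $w$ is orthant restriction. Take $L_1$, the $\theta_-$-sphere about $p$ inside $\R^{n+1}_+$, and $L_2$, the $(w-\theta_-)$-sphere inside $\R_+\times\R^n_-$. Accordingly $(\alpha,\beta)=(\|v^+\|,\|v^-\|)$ are the norms of the positive and negative parts of the tangential coordinates. Constraint \eqref{one} is $\min_{y\in L_1}\langle v,y\rangle\ge\cos w$, attained in direction $v^-/\beta$; this couples $x$ with the \emph{negative} part, which block coordinates cannot do. You must also check that the $L_2$-constraint \eqref{two} is redundant.

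The exponent $2/3$ and the factor $2$ do not come from choosing a split $k/n$. All $\binom nk$ sign patterns of type $(k,n-k)$ occur simultaneously, each contributing $\frac{\omega_k\omega_{n-k}}{2^n}\iint\alpha^{k-1}\beta^{n-k-1}(\ldots)$ with $\omega_k\omega_{n-k}/\omega_n\approx\binom nk^{1/2}$. The largest term of $\sum_k\binom nk^{3/2}\alpha^k\beta^{n-k}\le(\alpha^{2/3}+\beta^{2/3})^{3n/2}$ then dominates. Bounding a completion of $X$ by $X^w$ is a fine alternative to proving $\mathrm{diam}\,M\le w$ directly.

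For $w>\pi/2$ you name neither the body nor the estimate. The paper uses $M_{\pi-w}^*$ and the radial bound $\tan r\le\cot\theta_+/\cos\varphi$, obtained by testing only against $(\cos\theta_+,\sin\theta_+\,u^+/\|u^+\|)\in L_1$.

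Finally, $\sigma_u(w)<1$ concerns a specific supremum, attained on the boundary ellipse rather than in the interior. A first-order perturbation of a degenerate configuration cannot prove it for every $w$. For $w<\pi/2$ the paper shows the rescaled regions $\mathcal R_w$ shrink as $w$ increases, reducing to the Euclidean value $\approx0.89$. For $w>\pi/2$ it uses the closed form $\sigma_u=(1+\sqrt{-\cos w})/\sqrt{2(1-\cos w)}$, which is $<1$ if and only if $(1-\sqrt{-\cos w})^2>0$.
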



\begin{figure}[ht]
\centering
\begin{tikzpicture}
\pgfmathsetmacro{\wminus}{1.3978350627}      
\pgfmathsetmacro{\whalf}{1.5707963268}       
\pgfmathsetmacro{\wplus}{1.709792577354}     
\pgfmathsetmacro{\betastar}{0.431739420205}
\pgfmathsetmacro{\central}{0.610571743465}   

\begin{axis}[
    width=13.5cm,
    height=7cm,
    xmin=0, xmax=3.1416,
    ymin=0, ymax=1.08,
    axis lines=left,
    xlabel={$w$},
    xlabel style={yshift=-6pt},
    ylabel={$\sigma(w)$},
    xtick={0,0.5236,0.7854,1.0472,1.5708,2.0944,2.3562,2.6180,3.1416},
    xticklabels={$0$,$\frac{\pi}{6}$,$\frac{\pi}{4}$,$\frac{\pi}{3}$,$\frac{\pi}{2}$,$\frac{2\pi}{3}$,$\frac{3\pi}{4}$,$\frac{5\pi}{6}$,$\pi$},
    ytick={0,0.2,0.4,0.6,0.8,1},
    yticklabels={$0$,$0.2$,$0.4$,$0.6$,$0.8$,$1$},
    tick label style={font=\small},
    label style={font=\normalsize},
    legend style={
        draw=black,
        font=\scriptsize,
        at={(0.09,0.13)},
        anchor=south west
    },
    clip=false
]

\addplot[black,dotted,domain=0:3.1416,samples=2] {1};
\addlegendentry{$y=1$}

\addplot[
    blue,
    thick
] coordinates {
    (0.0000,0.8900)
    (0.1000,0.8900)
    (0.2000,0.8900)
    (0.3000,0.8898)
    (0.4000,0.8895)
    (0.5000,0.8890)
    (0.5236,0.8888)
    (0.6000,0.8882)
    (0.7000,0.8870)
    (0.7854,0.8850)
    (0.8500,0.8835)
    (0.9000,0.8820)
    (1.0000,0.8765)
    (1.0472,0.8720)
    (1.1000,0.8670)
    (1.1500,0.8610)
    (1.2000,0.8530)
    (1.2500,0.8450)
    (1.3000,0.8330)
    (1.3500,0.8200)
    (1.4000,0.8070)
    (1.4300,0.7950)
    (1.4600,0.7810)
    (1.4900,0.7650)
    (1.5200,0.7480)
    (1.5400,0.7360)
    (1.5550,0.7260)
    (1.5650,0.7180)
    (1.5708,0.7071)
};
\addlegendentry{Upper bound $\sigma_u$}

\addplot[
    blue,
    thick,
    domain=\whalf:3.1416,
    samples=500,
    forget plot
]
{(1 + sqrt(-cos(deg(x))))/sqrt(2*(1 - cos(deg(x))))};

\addplot[
    red,
    thick,
    dashed,
    domain=0.001:\wminus,
    samples=400
]
{2*sqrt(cos(deg(x)))*
 (sqrt(1+2*cos(deg(x))) - sqrt(cos(deg(x))))
 /(1+cos(deg(x)))};
\addlegendentry{Lower bound $\sigma_\ell$}

\addplot[
    red,
    thick,
    dashed,
    forget plot
] coordinates {
    (1.39783506,0.52705)
    (1.40499,0.52854)
    (1.41372,0.53298)
    (1.42245,0.53631)
    (1.43117,0.53990)
    (1.43990,0.54357)
    (1.44862,0.54735)
    (1.45735,0.55115)
    (1.46608,0.55498)
    (1.47480,0.55890)
    (1.48353,0.56289)
    (1.49226,0.56698)
    (1.50098,0.57116)
    (1.50971,0.57554)
    (1.51844,0.58008)
    (1.52716,0.58488)
    (1.53589,0.58990)
    (1.54462,0.59504)
    (1.55334,0.60027)
    (1.56207,0.60546)
    (1.57080,0.61057)
};

\addplot[
    red,
    thick,
    dashed,
    domain=\whalf:\wplus,
    samples=300,
    forget plot
]
{(
   \betastar*cos(deg((x-\whalf)/2))
   + sqrt(1-\betastar^2)*sin(deg((x-\whalf)/2))
 )/sin(deg(x/2))};

\addplot[
    red,
    thick,
    dashed,
    domain=\wplus:3.1415,
    samples=350,
    forget plot
]
{2*sqrt(-cos(deg(x)))/(1-cos(deg(x)))};

\end{axis}
\end{tikzpicture}
\caption{Graph of the lower and upper effective-radius bases
$\sigma_\ell$ and $\sigma_u$.}
\label{plot}
\end{figure}

The proof is divided into two upper-bound constructions and two different lower-bound arguments. Regarding the upper bound, for widths below $\pi/2$, we construct a spherical analogue of the Euclidean body from \cite{original} (\Cref{sec:upper-bound-small-width-case}), while width above $\pi/2$ are treated by spherical polarity (\Cref{sec:upper-bound-large-width-case}). Explicit volume computation and asymptotic estimation lead to optimization problems. 
For illustration, examples of how our body $M$ looks on $\mathbb{S}^2$ are shown in Figure \ref{fig:body}. The colors represent different orthants\footnote{A three-dimensional model is available at \url{http://prymak.net/spherical.html}.}.

For the lower bound, $\sigma_{\ell}^{\text{Sch}}$ is the result of a spherical adaptation of Schramm's illumination argument, using a ball-containment lemma, spherical Jung-type radius estimates, and the same exponential cap asymptotics (\Cref{sec:lower}). In the case $w=\pi/2$, such an approach produces only a subexponential estimate, see 
Bezdek's work~\cite[Remark~2.16]{BezdekBL}. We then present a second method yielding $\sigma_{\ell}^{\text{G}}$ which is based on the Gaussian measure, autocorrelation of the associated convex cone, and Moreau decomposition, and provides the stronger estimate near $w=\pi/2$ (\Cref{sec:lower_new}).



\begin{figure} \centering   \includegraphics[width=9cm]{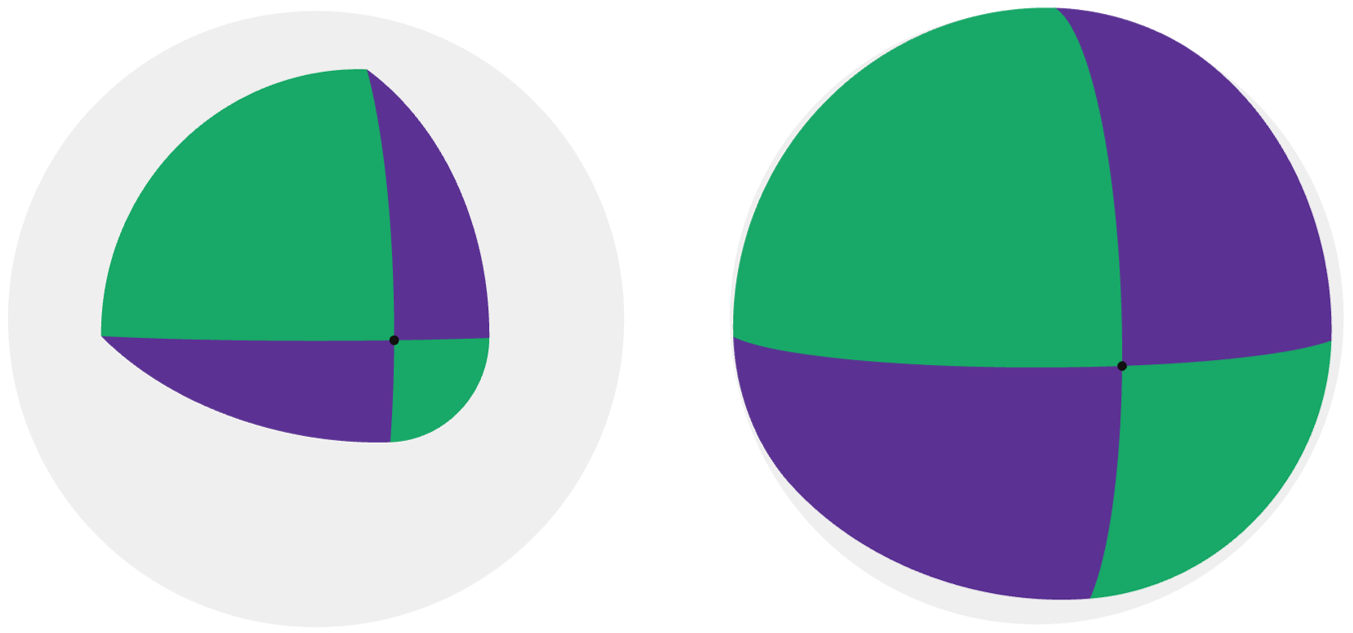}    \caption{Illustrations of $M$ for $w=\pi/3$ (left) and $w=2\pi/3$ (right).}    \label{fig:body}\end{figure}


\section{Basic spherical geometry}\label{sec:basic-spherical-geometry}

We briefly recall the spherical geometric notions used throughout the paper. For more details, we refer the reader to \cite{LM}. We regard $\mathbb{S}^n$ as the unit sphere in $\mathbb{R}^{n+1}$, endowed with the geodesic distance $d(x,y):=\cos^{-1}\langle x,y\rangle$ where $\langle\cdot, \cdot\rangle$ is the usual Euclidean dot product. For $x\in\mathbb{S}^n$, let \[H_x:=\{y\in\mathbb{S}^n:\langle x,y\rangle\geq 0\}\] be the closed hemisphere centered at $x$; its boundary $\partial H_x$ is the corresponding great subsphere. For $x\in\mathbb{S}^n$ and $0<r<\pi$, denote the geodesic ball by
\[
\mathbb{B}^n(x,r):=\{y\in\mathbb{S}^n:d(x,y)\leq r\}
=\{y\in\mathbb{S}^n:\langle x,y\rangle\geq \cos r\}.
\]
For $1\le j\le n+1$, denote the vector where the $j$th component is $1$ and the others are $0$ by $e_j$.
We shall usually fix the north pole $p=e_1$ and abbreviate $\mathbb{B}^n(r):=\mathbb{B}^n(p,r)$. Its spherical measure is $\mu_n(\mathbb{B}^n(r))=\omega_n s_n(r)$, where $s_n(r):=\int_0^r\sin^{n-1}t\,dt$ and $\omega_n=2\pi^{n/2}/\Gamma(n/2)$ is the surface area of $\mathbb{S}^{n-1}$; see, for example, \cite{KB}.

If $x,y\in\mathbb{S}^n$ are not antipodal, we denote by $[x,y]$ the unique geodesic segment joining them. A set $K\subset\mathbb{S}^n$ is spherically convex if it contains no pair of antipodal points and $[x,y]\subset K$ for every $x,y\in K$. In particular, every closed spherical ball of radius $<\pi/2$ is spherically convex, and intersections of spherically convex sets are again spherically convex.

For $x,y\in\mathbb{S}^n$, the intersection $L(x,y):=H_x\cap H_y$ is called a spherical lune. Its thickness is $\Delta(L(x,y)):=\pi-d(x,y)$, which is the geodesic distance between the two bounding great subspheres $\partial H_x$ and $\partial H_y$.
Let $K\subset\mathbb{S}^n$ be a spherical convex body. A hemisphere $H_x$ is said to support $K$ if $K\subset H_x$ and $K\cap\partial H_x\neq\emptyset$. Given such a supporting hemisphere $H_x$, the width of $K$ determined by $H_x$ is defined as
\[
\inf\{\Delta(L(x,y)):K\subset H_x\cap H_y\}.
\]
We say that $K$ has constant width $w\in(0,\pi)$ if this quantity is equal to $w$ for every supporting hemisphere $H_x$. 

In addition, write $\mathrm{diam}(K):=\sup\{d(x,y):x,y\in K\}$. The body $K$ is said to have constant diameter $\delta\in(0, \pi)$ if $\mathrm{diam}(K)=\delta$ and, for every $x\in\partial K$, there exists $y\in\partial K$ such that $d(x,y)=\delta$. Finally, we say that $K$ is a complete set of diameter $\delta\in(0, \pi/2]$ if $\mathrm{diam}(K\cup\{x\})>\delta$ for any $x\not\in K$. This is equivalent to
\[K=\bigcap_{y\in K}\mathbb{B}^n(y, \delta).\]

\section{Upper bounds below the central width}\label{sec:upper-bound-small-width-case}

\subsection{Construction via completion}

Let $n\ge 2$ and $0<w\le\pi/2$. Define \[T_1:=\mathbb{S}^n\cap\mathbb{R}_+^{n+1}\quad\text{and}\quad T_2:=\mathbb{S}^n\cap(\mathbb{R}_+\times\mathbb{R}_-^n).\] Consequently, let \[L_1:=\{x\in T_1 : d(x, p)=\theta_-\}\quad \text{and}\quad L_2:=\{x\in T_2 : d(x, p)=w-\theta_-\}\] where $\theta_-:=\cos^{-1}\sqrt{\cos w}$. Finally let $L:=L_1\cup L_2$ and
\begin{align*}
    M_w:=\bigcap_{x\in L}\mathbb{B}^n(x, w).
\end{align*}
For brevity, we will usually omit the subscript $w$ when the context is clear. Note that $M=\mathcal{O}_n$ at $w=\pi/2$. Throughout this section, assume $0<w<\pi/2$ when proving results. 
It is easy to check that $w/2\le \theta_-\le w$ and $\mathrm{diam}(L_1)=\mathrm{diam}(L)=w$. Also, we know the boundary cases $M\cap T_1=\mathbb{B}^n(\theta_-)\cap T_1$ and $M\cap T_2=\mathbb{B}^n(w-\theta_-)\cap T_2$.
Our next task is to provide an analytic description for $M$.

\begin{lemma}\label{constraintlemma}
    Suppose $v=(v_1, v_2, \ldots, v_{n+1})\in \mathbb{S}^n$ with $x:=v_1$. We decompose the tangential part $v_\perp:=(v_2,\dots,v_{n+1})$ into positive and negative orthant components:
    \[v^{\pm}:=(\max\{\pm v_2, 0\}, \ldots, \max\{\pm v_{n+1}, 0\}).\]
    Then $v_{\perp}=v^+-v^-$ and $\langle v^+, v^-\rangle =0$. Let $\alpha=\|v^+\|$ and $\beta=\|v^-\|$. 
    Label the inequality constraints
    \begin{align}
        \sqrt{1-\alpha^2-\beta^2}\cos\theta_- - \beta\sin\theta_- &\geq \cos w, \label{one} \\
        \sqrt{1-\alpha^2-\beta^2}\cos(w-\theta_-) - \alpha\sin(w-\theta_-) &\geq \cos w. \label{two}
    \end{align}
    Then $v\in M$ if and only if $x\ge 0$ and $(\alpha, \beta)$ lies in the region
    \begin{align*}
    \mathcal{A}_w := \left\{ (\alpha, \beta) \in \mathbb{R}_+^2 : \text{ $(\alpha,\beta)$ satisfy \eqref{one} and \eqref{two}} \right\}.
    \end{align*}
\end{lemma}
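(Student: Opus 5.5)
The plan is to unwind the definition $M=\bigcap_{y\in L}\mathbb B^n(y,w)$. Membership $v\in M$ is equivalent to $\langle v,y\rangle\ge\cos w$ for every $y\in L_1\cup L_2$. So I will compute $\min_{y\in L_i}\langle v,y\rangle$ explicitly for $i=1,2$.

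\textbf{Parametrization and the two minima.} Points of $L_1$ are exactly $y=(\cos\theta_-,\sin\theta_-\,u)$ with $u\in\mathbb R^n_+$, $\|u\|=1$. Points of $L_2$ are $y=(\cos(w-\theta_-),-\sin(w-\theta_-)\,u)$ with $u$ of the same type. Hence
\[
\min_{y\in L_1}\langle v,y\rangle=x\cos\theta_-+\sin\theta_-\,m_1,\qquad
\min_{y\in L_2}\langle v,y\rangle=x\cos(w-\theta_-)-\sin(w-\theta_-)\,m_2,
\]
where $m_1=\min\langle v^+-v^-,u\rangle$ and $m_2=\max\langle v^+-v^-,u\rangle$, both over unit $u\ge0$.

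\textbf{Evaluating $m_1,m_2$.} Since $v^\pm\ge0$ have disjoint supports, $\langle v^+,u\rangle\ge0$ and $\langle v^-,u\rangle\le\beta$ by Cauchy--Schwarz. Thus $m_1\ge-\beta$, with equality attained at $u=v^-/\beta$ whenever $\beta>0$. Symmetrically, $m_2=\alpha$ whenever $\alpha>0$. The degenerate cases are the subtle point: if $\beta=0$ then $m_1=\min_j v_{j+1}$, which may be strictly positive, and likewise $m_2<\alpha$ is possible when $\alpha=0$. I treat these cases after establishing $x\ge0$.

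\textbf{Sign of $x$.} Test the definition with $u=e_j$ in both $L_1$ and $L_2$, for the same coordinate $j$. Multiply the resulting inequalities by $\sin(w-\theta_-)$ and $\sin\theta_-$ respectively and add; the $v_{j+1}$ terms cancel, giving
\[
x\sin w\ge\cos w\,(\sin\theta_-+\sin(w-\theta_-))>0 .
\]
Hence $v\in M$ forces $x>0$, and then $x=\sqrt{1-\alpha^2-\beta^2}$. Conversely, assuming $x\ge0$, this substitution turns the nondegenerate conditions into exactly \eqref{one} and \eqref{two}.

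\textbf{Degenerate cases (the main obstacle).} For these I show that each of the two constraints implies the other. Write $\sin\psi=\alpha$ with $\psi\in[0,\pi/2]$. For $\beta=0$, constraint \eqref{two} reads $\cos(w-\theta_-+\psi)\ge\cos w$, so $\psi\le\theta_-$. This gives $x=\cos\psi\ge\cos\theta_-=\sqrt{\cos w}$, which is exactly \eqref{one} at $\beta=0$, i.e.\ $x\cos\theta_-\ge\cos w$. So when $\beta=0$, $v\in M$ is equivalent to \eqref{two} alone, which is in turn equivalent to \eqref{one} and \eqref{two} together.

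For $\alpha=0$, constraint \eqref{one} similarly gives $x\ge\cos(w-\theta_-)$. Since $w-\theta_-\le\theta_-$, we have $\cos^2(w-\theta_-)\ge\cos^2\theta_-=\cos w$, and this yields \eqref{two} at $\alpha=0$. So in every case, $v\in M$ if and only if $x\ge0$ and $(\alpha,\beta)\in\mathcal A_w$.
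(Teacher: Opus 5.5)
Your proposal is correct and follows the same route as the paper. Both compute $\min_{y\in L_1}\langle v,y\rangle$ and $\min_{y\in L_2}\langle v,y\rangle$ using $\langle v^+,u\rangle\ge 0$, Cauchy--Schwarz, and the disjoint supports of $v^\pm$. Your version is somewhat more complete than the written proof in three places:
\begin{enumerate}
\item You actually prove that $x>0$ on $M$, by pairing the $e_j$-directions in $L_1$ and $L_2$; the paper only asserts $x\ge 0$.
\item For $\beta=0$, you derive \eqref{one} from \eqref{two} via the angle-addition identity. The paper instead invokes the boundary identity $M\cap T_1=\mathbb{B}^n(\theta_-)\cap T_1$ stated before the lemma, and it does not explicitly verify \eqref{two} in this case.
\item You treat the case $\alpha=0$ explicitly, which the paper leaves to the reader.
\end{enumerate}
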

\begin{proof}
First note that, since $x\ge 0$ on $M\subset\mathbb{S}^n$, $x=\sqrt{1-\alpha^2-\beta^2}$.\\
   ($\Rightarrow$) For now, let us assume $\alpha, \beta\ne 0$. Suppose $v\in M$. For any $y\in L_1$, we can write $y=(\cos\theta_-, (\sin\theta_-)u)$ with $u\in\mathbb{S}^{n-1}\cap\mathbb{R}^n_+$. The inner product is
    \[
        \langle v, y\rangle = x\cos\theta_- + \sin\theta_-\,\langle v_\perp, u\rangle 
        = x\cos\theta_- + \sin\theta_-\big(\langle v^+, u\rangle - \langle v^-, u\rangle\big).
    \]
    Since $u, v^+ \in \mathbb{R}^n_+$, we have $\langle v^+, u\rangle \ge 0$. By Cauchy--Schwarz,  $\langle v^-, u\rangle \le \|v^-\|\|u\| = \beta$, with equality when $u$ aligns with $v^-/\beta$. Thus
    \[
        \min_{y\in L_1} \langle v, y\rangle = x\cos\theta_- - \beta\sin\theta_-.
    \]
    The condition $v\in M$ requires $\langle v, y\rangle\ge\cos w$ for all $y\in L_1$, so the minimum satisfies \eqref{one}. 
    
    For $y\in L_2$, write $y=(\cos(w-\theta_-), (\sin(w-\theta_-))u)$ with $u\in\mathbb{S}^{n-1}\cap\mathbb{R}^n_-$. Now observe that $\langle v^+, u\rangle \le 0$ and $\langle v^-, u\rangle \le 0$. The minimum of $\langle v_\perp, u\rangle$ occurs when $u$ aligns with $-v^+/\alpha$, giving $\min \langle v_\perp, u\rangle = -\alpha$. Hence
    \[
        \min_{y\in L_2} \langle v, y\rangle = x\cos(w-\theta_-) - \alpha\sin(w-\theta_-) \geq \cos w,
    \]
    which is \eqref{two}. Therefore $(\alpha,\beta)\in\mathcal{A}_w$.

    Finally, suppose $\alpha=0$ or $\beta=0$. Let us only prove the case $\beta=0$. Let $v\in M$, then $v_{\perp}=v^+$ and $\|v_{\perp}\|=\alpha$. So we have \begin{align}\label{boundarycase}
        \sqrt{1-\alpha^2}=\langle p, v\rangle=\cos d(p, v)\ge\cos \theta_-=\sqrt{\cos w}.
    \end{align} Multiplying both sides of \eqref{boundarycase} by $\cos\theta_-$, we are done.

    ($\Leftarrow$) Conversely, suppose $(\alpha,\beta)\in\mathcal{A}_w$. By the same minimization argument, we have $\min_{y\in L_1} \langle v, y\rangle \geq \cos w$ and $\min_{y\in L_2} \langle v, y\rangle \geq \cos w$. Thus $\langle v, y\rangle \geq \cos w$ for all $y\in L_1\cup L_2$, which by definition implies $v\in M$.
\end{proof}

We now show that with our choice of $\theta_-$ one can drop~\eqref{two} from Lemma~\ref{constraintlemma}, i.e., in a sense the part $L_1$ is already defining $\mathcal{A}_w$.
\begin{proposition}\label{cond2redundant} For $(\alpha, \beta)\in\mathbb{R}_+^2$, 
\eqref{one} implies \eqref{two}.
\end{proposition}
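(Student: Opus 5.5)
The plan is to extract from \eqref{one} only the crude consequence that $v$ lies in the cap $\mathbb{B}^n(\theta_-)$, and then to observe that this cap already satisfies \eqref{two}. In other words, $\beta$ plays no role beyond its nonnegativity. Throughout, write $x:=\sqrt{1-\alpha^2-\beta^2}$; this presupposes $\alpha^2+\beta^2\le 1$, which is implicit in \eqref{one} being meaningful.

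\emph{Step 1.} Since $\beta\ge 0$ and $\sin\theta_-\ge 0$, inequality \eqref{one} gives $x\cos\theta_-\ge\cos w$. With $\cos\theta_-=\sqrt{\cos w}>0$ this yields
\[
x\ge \frac{\cos w}{\sqrt{\cos w}}=\sqrt{\cos w}=\cos\theta_- .
\]
Write $x=\cos\varphi$ with $\varphi\in[0,\pi/2]$. Then $0\le\varphi\le\theta_-$, and consequently $\alpha\le\sqrt{1-x^2}=\sin\varphi$.

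\emph{Step 2.} We bound the left-hand side of \eqref{two}. Since $\sin(w-\theta_-)\ge 0$ (using $\theta_-\le w$, as noted before Lemma~\ref{constraintlemma}) and $\alpha\le\sin\varphi$,
\[
x\cos(w-\theta_-)-\alpha\sin(w-\theta_-)\ \ge\ \cos\varphi\cos(w-\theta_-)-\sin\varphi\sin(w-\theta_-)=\cos(\varphi+w-\theta_-).
\]

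\emph{Step 3.} Because $0\le w-\theta_-\le \varphi+w-\theta_-\le w\le\pi/2$ and cosine is decreasing on $[0,\pi]$, we get $\cos(\varphi+w-\theta_-)\ge\cos w$. This is exactly \eqref{two}.

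The only delicate points are the sign conditions: $\sin\theta_-\ge0$, $\sin(w-\theta_-)\ge0$, and $\varphi+w-\theta_-\in[0,\pi]$. All of these follow from $w/2\le\theta_-\le w\le\pi/2$, stated earlier in this section. I do not expect a genuine obstacle here. The main thing to check is that discarding $\beta$ in Step 1 loses nothing needed later, and it does not, because Steps 2 and 3 use only $x\ge\cos\theta_-$ and $\alpha\le\sqrt{1-x^2}$.
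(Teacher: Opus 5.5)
Your proof is correct and matches the paper's argument essentially step for step. Both drop the $\beta\sin\theta_-$ term to get $x\ge\cos\theta_-$, use $\alpha\le\sqrt{1-x^2}$ together with $\sin(w-\theta_-)\ge 0$, and finish with $\cos(\varphi+w-\theta_-)\ge\cos w$ by monotonicity of cosine; your explicit check that $\varphi+w-\theta_-\ge 0$ (from $\theta_-\le w$) is a small point the paper leaves implicit.
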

\begin{proof}
    Suppose $(\alpha, \beta)\in\mathbb{R}^2_+$ satisfies \eqref{one}. Then
\begin{align*}
    x\cos\theta_- \ge\cos w+\beta\sin\theta_- \ge\cos w\quad \Rightarrow\quad x\ge \frac{\cos w}{\cos\theta_-
    }=\cos\theta_-.
\end{align*}
Note that $\alpha=\sqrt{1-x^2-\beta^2}\le\sqrt{1-x^2}$. Because $\sin(w-\theta_-)\ge 0$,
\begin{align}\label{trigineq}
     x\cos(w-\theta_-) - \alpha\sin(w-\theta_-) \ge x\cos(w-\theta_-) - \sqrt{1-x^2}\sin(w-\theta_-).
\end{align}
 Let $\phi := \cos^{-1} x \in [0, \pi/2]$. Then $x=\cos\phi$ and $\sqrt{1-x^2}=\sin\phi$, so the right-hand side of \eqref{trigineq} is $\cos(\phi + w - \theta_-)$.
Since $x \ge \cos\theta_-$, we have $\phi \le \theta_-$, hence $\phi + w - \theta_- \le w$. As cosine is decreasing on $[0, \pi/2]$ and $w \le \pi/2$, we yield
    $\cos(\phi + w - \theta_-) \ge \cos w$.
    Combining the inequalities yields \eqref{two} as desired. 
\end{proof}

After some algebraic manipulations, we can rewrite \eqref{one} as
\begin{align}\label{ellipse}
    \frac{\alpha^2}{\sin^2w}+\frac{(\beta+\cos w\sin\theta_-)^2}{\cos^2\theta_-\sin^2w}\le 1.
\end{align}Thus $\mathcal{A}_w$ is the positive region bounded by an ellipse.

\begin{proposition}\label{constantwidth}
    $M$ is a spherical body of constant width $w$.
\end{proposition}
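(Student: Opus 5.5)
We prove the claim through the equivalence between constant width and completeness. By \cite{HW,LM,LM2}, for $0<w\le\pi/2$ a spherical convex body has constant width $w$ if and only if it is a complete set of diameter $w$, that is, if and only if
\[
K=\bigcap_{y\in K}\mathbb{B}^n(y,w).
\]
So it suffices to show two things: $\mathrm{diam}(M)=w$, and every point outside $M$ lies at distance greater than $w$ from some point of $M$. Convexity of $M$ is immediate, since $M$ is an intersection of balls of radius $w<\pi/2$. It also contains $p$, so it is nonempty.

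\emph{Step 1: $L\subset M$.} We apply Lemma~\ref{constraintlemma} and Proposition~\ref{cond2redundant}, so it is enough to check \eqref{one}.
\begin{itemize}
\item For $y\in L_1$ we have $\beta=0$ and $\alpha=\sin\theta_-$. Then \eqref{one} reads $\cos^2\theta_-\ge\cos w$, which holds with equality.
\item For $y\in L_2$ we have $\alpha=0$ and $\beta=\sin(w-\theta_-)$. Then \eqref{one} reads $\cos(w-\theta_-)\cos\theta_- - \sin(w-\theta_-)\sin\theta_-=\cos w$, again an equality.
\end{itemize}
Hence $L\subset M$, and $\mathrm{diam}(L)=w$ gives $\mathrm{diam}(M)\ge w$.

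\emph{Step 2: $\mathrm{diam}(M)\le w$.} Take $u,v\in M$; we want $\langle u,v\rangle\ge\cos w$. The plan is to reduce to the ellipse description \eqref{ellipse}. Write
\[
\langle u,v\rangle=x_ux_v+\langle u^+,v^+\rangle+\langle u^-,v^-\rangle-\langle u^+,v^-\rangle-\langle u^-,v^+\rangle.
\]
The first three cross terms are nonnegative. The cross terms are bounded by Cauchy--Schwarz: $\langle u^+,v^-\rangle\le\alpha_u\beta_v$ and $\langle u^-,v^+\rangle\le\beta_u\alpha_v$. Dropping the nonnegative terms, we are left with proving the two-dimensional inequality
\[
\sqrt{1-\alpha_u^2-\beta_u^2}\sqrt{1-\alpha_v^2-\beta_v^2}-\alpha_u\beta_v-\beta_u\alpha_v\ge\cos w
\]
for all $(\alpha_u,\beta_u),(\alpha_v,\beta_v)\in\mathcal A_w$.

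This is the main obstacle. The left side decreases in each of $\alpha,\beta$, so the minimum sits on the boundary arc of $\mathcal A_w$. I would first test the extreme pair $(\sin\theta_-,0),(0,\sin(w-\theta_-))$, where equality holds. Then I would handle the general case by reverse-constructing points: realise both parameter pairs by explicit vectors in the three-dimensional slice spanned by $e_1$ and two orthogonal directions, one positive and one negative. In that slice $M$ is the intersection of balls centred at the two extremal arcs, and I would check via \eqref{one} and \eqref{ellipse} that the pair is at distance at most $w$. A fallback is a Lagrange-multiplier analysis on the ellipse arcs, exploiting the symmetry $\alpha_u\leftrightarrow\beta_v$.

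\emph{Step 3: completeness.} Let $z\notin M$. Since $M=\bigcap_{x\in L}\mathbb{B}^n(x,w)$, there is $x\in L\subset M$ with $d(z,x)>w$. Therefore $\mathrm{diam}(M\cup\{z\})>w$, so $M$ is complete of diameter $w$. By the cited equivalence, $M$ has constant width $w$.
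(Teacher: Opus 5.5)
\textbf{Verdict: there is a genuine gap, in Step 2.} Your overall structure matches the paper's proof. First, $L\subset M$ gives $\mathrm{diam}(M)\ge w$. Second, the Cauchy--Schwarz expansion reduces $\mathrm{diam}(M)\le w$ to the two-dimensional inequality you display. Third, $M\subset\bigcap_{x\in M}\mathbb{B}^n(x,w)\subset\bigcap_{x\in L}\mathbb{B}^n(x,w)=M$ gives completeness, and hence constant width via \cite{LM2}. Steps 1 and 3 are fine.

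The problem is that in Step 2 you correctly isolate
\[
x_ux_v-\alpha_u\beta_v-\beta_u\alpha_v\ge\cos w,\qquad x=\sqrt{1-\alpha^2-\beta^2},
\]
but you never prove it. None of the three routes you sketch closes it:
\begin{itemize}
\item Testing the extreme pair only shows that equality is attained there.
\item The ``reverse construction in a three-dimensional slice'' ends with ``check that the pair is at distance at most $w$''. That is the diameter bound restricted to the slice, i.e.\ the very statement you are trying to prove.
\item The Lagrange-multiplier fallback is an optimization over a product of two boundary arcs, which you do not carry out.
\end{itemize}
As written, $\mathrm{diam}(M)\le w$, the only nontrivial part of the proposition, is unproven.

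The missing idea is that \eqref{one} becomes linear in $\beta$ once you solve for $x$. Since $\cos^2\theta_-=\cos w$, it gives
\[
x\ge\cos\theta_-+\beta\tan\theta_- .
\]
In particular $x\ge\cos\theta_-$, so on $\mathcal A_w$ you also have $\alpha\le\sqrt{1-x^2}\le\sin\theta_-=\sqrt{1-\cos w}$. Now multiply the two (positive) lower bounds for $x_u$ and $x_v$, and use $\cos\theta_-\tan\theta_-=\sin\theta_-$:
\[
x_ux_v-\alpha_u\beta_v-\beta_u\alpha_v\ge\cos w+\beta_u\beta_v\tan^2\theta_-+\beta_u(\sin\theta_--\alpha_v)+\beta_v(\sin\theta_--\alpha_u)\ge\cos w.
\]
Every extra term on the right is nonnegative, so the inequality follows. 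This is exactly the paper's argument, and it needs no boundary or extremal analysis at all.
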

\begin{proof}
    First it is easy to see that $\mathrm{diam}(M)\ge w$ since $L\subset M$ and $\mathrm{diam}(L)=w$. Now assume we know that $\mathrm{diam}(M)\le w$. Then we see that
    \[M\subset\bigcap_{x\in M}{\mathbb{B}}^n(x, w)\subset \bigcap_{x\in L}{\mathbb{B}}^n(x, w)=M.\]
    So $M$ is a complete set of diameter $w$. Using \cite{LM2}, we can conclude that $M$ is a body of constant width $w$.

    Now let us justify why $\mathrm{diam}(M)\le w$.
Take any $u, v \in M$ with parameters $(\alpha, \beta)$ and $(\alpha', \beta')$ in $\mathcal{A}_w$. Decomposing $u_{\perp}=u^+-u^-$, $v_{\perp}=v^+-v^-$, and using Cauchy--Schwarz, 
    \begin{align}\label{cauc}
        \langle u, v\rangle=xx'+\langle u^+, v^+\rangle+\langle u^-, v^-\rangle-\langle u^+, v^-\rangle-\langle u^-, v^+\rangle
        \geq xx' - \alpha\beta' - \beta\alpha',
    \end{align}
    where $x=\sqrt{1-\alpha^2-\beta^2}$ and $x'=\sqrt{1-(\alpha')^2-(\beta')^2}$.
    By \eqref{one} and $\cos^2\theta_-=\cos w$, we find
    \begin{align*}
        x \ge \beta\sqrt{\frac{1-\cos w}{\cos w}}+\sqrt{\cos w}\quad\text{and}\quad x' \ge \beta'\sqrt{\frac{1-\cos w}{\cos w}}+\sqrt{\cos w}.
    \end{align*}
    Hence, from \eqref{cauc}, we have
    \begin{align*}
        \langle u, v\rangle&\ge \cos w+\beta\beta'\left(\frac{1-\cos w}{\cos w}\right)+\beta(\sqrt{1-\cos w}-\alpha')+\beta'(\sqrt{1-\cos w}-\alpha).
    \end{align*}
    Finally note that $\max_{(\alpha, \beta)\in\mathcal{A}_w}\alpha$ is the positive $\alpha$-intercept of $\mathcal{A}_w$ which is $\sqrt{1-\cos w}$. Therefore $\langle u, v\rangle\ge\cos w$ as desired.
\end{proof}

\subsection{Volume setup}

Our next goal is to compute $\mu_n(M)$.  We use the standard Riemannian volume formula in which the volume density is given by the square root of the determinant of the Gram matrix; see, for example, Chapter~1 of \cite{Jost}.

First, we decompose $M$ according to the sign patterns of the tangential coordinates $v_2,\dots,v_{n+1}$.  (The first coordinate $x=v_1$ is non-negative on $M$ and thus does not affect the sign combinatorics.) A $(k, n-k)$ orthant $Q$ consists of points with exactly $k$ positive and $n-k$ negative tangential coordinates, while the first coordinate is strictly positive. The boundary cases $(n,0)$ and $(0,n)$ contribute to $\frac{1}{2^n}\mu_n(\mathbb{B}^n(\theta_-))$ and $\frac{1}{2^n}\mu_n(\mathbb{B}^n(w-\theta_-))$ respectively.

Now let us focus on $1\le k\le n-1$.
Fix a $(k, n-k)$ orthant $Q$. After reordering coordinates, we parametrize $M\cap Q$ by
\[
\Phi_Q(\alpha, \beta, s, t)=\big(\sqrt{1-\alpha^2-\beta^2},\; \alpha\psi_1(s),\; -\beta\psi_2(t)\big),
\]
where $(\alpha, \beta)\in\mathcal{ A}_w$, $s\in\mathcal{D}_1:=\mathbb{B}_E^{k-1}\cap \mathbb{R}^{k-1}_+$, $t\in\mathcal{D}_2:=\mathbb{B}_E^{n-k-1}\cap \mathbb{R}^{n-k-1}_+$, and $\psi_1,\psi_2$ are the standard graph parametrizations of the positive spherical wedges $\mathbb{S}^{k-1}\cap\mathbb{R}^k_+$ and $\mathbb{S}^{n-k-1}\cap\mathbb{R}^{n-k}_+$, respectively. For instance, \[\psi_1(s)=\psi_1(s_1, \ldots, s_{k-1})=\left(s_1, \ldots, s_{k-1}, \sqrt{1-\|s\|^2}\right).\]
The Gram matrix of $\Phi_Q$ is block-diagonal:
\[
G(\Phi_Q)=\begin{pmatrix}
A & O & O \\
O & \alpha^2G(\psi_1) & O \\
O & O & \beta^2G(\psi_2)
\end{pmatrix},
\quad
A:=\begin{pmatrix}
\frac{\alpha^2}{1-\alpha^2-\beta^2}+1 & \frac{\alpha\beta}{1-\alpha^2-\beta^2}\\
\frac{\alpha\beta}{1-\alpha^2-\beta^2} & \frac{\beta^2}{1-\alpha^2-\beta^2}+1
\end{pmatrix}.
\]
A direct computation gives $\det A = (1-\alpha^2-\beta^2)^{-1}$. The volume form is therefore
\begin{align*}
    dV &=\sqrt{\det G(\Phi_Q)}\,ds\,dt\,d\alpha \,d\beta\\
    &=\sqrt{(\det A)(\det \alpha^2G(\psi_1))(\det\beta^2 G(\psi_2)) }ds\,dt\,d\alpha \,d\beta
    \\ &=\frac{\alpha^{k-1}\beta^{n-k-1}}{\sqrt{1-\alpha^2-\beta^2}} \cdot \frac{ds}{\sqrt{1-\|s\|^2}} \cdot \frac{dt}{\sqrt{1-\|t\|^2}} \, d\alpha\, d\beta.
\end{align*}
Integrating over the angular domains $\mathcal{D}_1,\mathcal{D}_2$ yields the surface areas of the positive wedges:
\[
\int_{\mathcal{D}_1}\frac{ds}{\sqrt{1-\|s\|^2}} = \frac{\omega_k}{2^k} \quad\text{and}\quad
\int_{\mathcal{D}_2}\frac{dt}{\sqrt{1-\|t\|^2}} = \frac{\omega_{n-k}}{2^{n-k}}.
\]
By Fubini--Tonelli's theorem, we obtain
\[
\mu_n(M\cap Q)=\frac{\omega_k \omega_{n-k}}{2^n}\iint_{\mathcal{A}_w}\frac{\alpha^{k-1}\beta^{n-k-1}}{\sqrt{1-\alpha^2-\beta^2}}\,d\alpha\,d\beta.
\]
Since there are $\binom{n}{k}$ orthants of type $(k,n-k)$, combining all these formulas, we yield $\mu_n(M)=V_1+V_2$
where
\[V_1:=\frac{\omega_n}{2^n}\left(\int_0^{\theta_-}\sin^{n-1}xdx+\int_0^{w-\theta_-}\sin^{n-1}xdx\right)\]
and
\[V_2:=\sum_{k=1}^{n-1}\binom{n}{k}\frac{\omega_k \omega_{n-k}}{2^n}\iint_{\mathcal{A}_w}\frac{\alpha^{k-1}\beta^{n-k-1}}{\sqrt{1-\alpha^2-\beta^2}}\,d\alpha\,d\beta.\]

\subsection{Effective radius computation}

Before proceeding, we introduce asymptotic notations. For two sequences of positive real numbers $\{a_n\}$ and $\{b_n\}$, we write $a_n\sim b_n$
and $a_n\lesssim b_n$ to mean, respectively,
\[\lim_{n\to\infty}\frac{a_n}{b_n}=1\quad\text{and}\quad 0\leq\limsup_{n\to\infty}\frac{a_n}{b_n}\leq 1.\]
Two key asymptotic facts we will use here are $\lim_{n \to \infty} (n^c)^{1/n} = 1$ for any fixed $c\in\mathbb{R}$, and  $(\sum_{i=1}^m a_i)^{1/n} \sim (\max a_i)^{1/n}$ for fixed $m$, or even if $m=n$.

Next we obtain a good asymptotic upper bound for $\mathrm{rer}_w(M)$. Clearly,
\begin{align*}
    \left(\frac{\mu_n(M)}{\mu_n(\mathbb{B}^n(w/2))}\right)^{1/n}=\left(\frac{V_1+V_2}{\mu_n(\mathbb{B}^n(w/2))}\right)^{1/n}\sim\left(\max\left\{\frac{V_1}{\mu_n(\mathbb{B}^n(w/2))}, \frac{V_2}{\mu_n(\mathbb{B}^n(w/2))}\right\}\right)^{1/n}.
\end{align*}
Next we estimate each term. Recall that 
\begin{align}\label{intsin}
    \left(\int_0^r\sin^nxdx\right)^{1/n}\sim \sin r,
\end{align}
where $0<r<\pi/2$, which follows from the $L^p$-to-$L^{\infty}$-limit. 

Now, for the first term, by \eqref{intsin}, we see that
\begin{align*}
    \left(\frac{V_1}{\mu_n(\mathbb{B}^n(w/2))}\right)^{1/n}=\frac{1}{2}\left(\frac{s_{n}(\theta_-)+s_n(w-\theta_-)}{s_{n}(w/2)}\right)^{1/n}\sim\frac{1}{2}\left(\frac{s_{n}(\theta_-)}{s_{n}(w/2)}\right)^{1/n}\sim \frac{1}{2}\cdot\frac{\sin\theta_-}{\sin (w/2)}.
\end{align*}
For the second term, first note that for $(\alpha, \beta)\in\mathcal{A}_w$, by \eqref{one}, we have
\begin{align*}
    \frac{1}{\sqrt{1-\alpha^2-\beta^2}} &\le\frac{\cos\theta_-}{\cos w}\le\frac{\cos (w/2)}{\cos w}=:C_w.
\end{align*}
So we obtain the inequality bound
\begin{align}\label{seven}
    V_2\le C_w\sum_{k=1}^{n-1}\binom{n}{k}\frac{\omega_k\omega_{n-k}}{2^n}\iint_{\mathcal{A}_w}\alpha^{k-1}\beta^{n-k-1}d\alpha d\beta.
\end{align}
Next using Stirling's formula, one can show that, uniformly over $0\le k\le n$,
\begin{align*}
\left(\frac{\omega_k\omega_{n-k}}{\omega_n}\right)^{1/n} &\sim \left(\frac{k(n-k)}{n}\cdot\frac{\sqrt{\pi n}}{\pi\sqrt{k(n-k)}}\cdot\frac{n^{n/2}}{k^{k/2}(n-k)^{(n-k)/2}}\right)^{1/n}\sim\left(\sqrt{\binom{n}{k}}\right)^{1/n}.
\end{align*}
Combining this with \eqref{intsin}, \eqref{seven}, and the fact that $x\mapsto x^{3/2}$ is superadditive, 
\begin{align*}
    \left(\frac{V_2}{\mu_n(\mathbb{B}^n(w/2))}\right)^{1/n} &\lesssim  \frac{1}{2\sin(w/2)}\left(\sum_{k=1}^{n-1}\binom{n}{k}\frac{\omega_k\omega_{n-k}}{\omega_{n}}\iint_{\mathcal{A}_w}\alpha^{k-1}\beta^{n-k-1}\,d\alpha\,d\beta\right)^{1/n} \\
    &\sim \frac{1}{2\sin(w/2)}\left(\max_{1\le k\le n-1}\binom{n}{k}^{3/2}\iint_{\mathcal{A}_w}\alpha^{k-1}\beta^{n-k-1}\,d\alpha\,d\beta\right)^{1/n}\\
    &\lesssim  \frac{1}{2\sin(w/2)}\left(\sum_{k=0}^n\binom{n}{k}^{3/2}\mathrm{Area}(\mathcal{A}_w)\sup_{(\alpha, \beta)\in\mathcal{A}_w}\alpha^k\beta^{n-k}\right)^{1/n}\\
    &\lesssim \frac{1}{2\sin(w/2)}\left(\sum_{k=0}^n\binom{n}{k}\sup_{(\alpha, \beta)\in\mathcal{A}_w}(\alpha^{2k/3})(\beta^{2(n-k)/3})\right)^{3/2n}\\
    &\lesssim\frac{1}{2\sin(w/2)}\sup_{(\alpha, \beta)\in\mathcal{A}_w}(\alpha^{2/3}+\beta^{2/3})^{3/2}.
\end{align*}

Denote
$L_-(w):=\sup_{(\alpha, \beta)\in\mathcal{A}_w}(\alpha^{2/3}+\beta^{2/3})^{3/2}$.
Therefore we deduce that
\begin{align}\label{deducesigup}
    \sigma_n(w)\le\left(\frac{\mu_n(M)}{\mu_n(\mathbb{B}^n(w/2))}\right)^{1/n}\lesssim\frac{1}{2\sin(w/2)}\max\{\sin\theta_-, L_-(w)\}=:\sigma_u(w).
\end{align}
Notice that $\frac{\sin\theta_-}{2\sin(w/2)}=\frac{1}{\sqrt{2}}<1$. 
Now we describe $L_-(w)$.

\begin{proposition} \label{uniqueroot}
Let $0<w<\pi/2$.
Denote \(\Lambda(w):=\cos\theta_-\). Then $L_-(w)$ is attained at
\[
(\alpha_*, \beta_*)=\left(\sqrt{(1-\Lambda^4)(1-(\tau(w))^2)},
 \sqrt{1-\Lambda^4}
\left(
\Lambda\tau(w)-\frac{\Lambda^2}{\sqrt{1+\Lambda^2}}
\right)\right),
\]
where $ \tau(w)$ is the unique real root
of the polynomial
\[ k_w(x):= (1-\Lambda^2)x^4 -\frac{\Lambda}{\sqrt{1+\Lambda^2}}x^3 +2\Lambda^2x^2 -\Lambda^2\] 
in the interval $\left[\frac{\Lambda}{\sqrt{1+\Lambda^2}},1\right]$.
\end{proposition}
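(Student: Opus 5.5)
The plan is to reduce the maximization defining $L_-(w)$ to a one-variable problem on the elliptic arc bounding $\mathcal{A}_w$. Then $k_w$ should appear as the (cleared) critical-point equation, and uniqueness follows from a sign/monotonicity analysis on the stated interval.

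\textbf{Step 1: the maximum lies on the curved part of the boundary.} The function $f(\alpha,\beta)=(\alpha^{2/3}+\beta^{2/3})^{3/2}$ is nondecreasing in each variable on $\mathbb{R}_+^2$, and $\mathcal{A}_w$ is compact. If $(\alpha,\beta)\in\mathcal{A}_w$ lies strictly inside the ellipse \eqref{ellipse}, we can increase $\alpha$ until we hit it. So the supremum is attained on the arc of \eqref{ellipse} in the closed positive quadrant.

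Using $\sin w=\sqrt{1-\Lambda^4}$ (since $\cos w=\Lambda^2$) and
\[
\frac{\cos w\sin\theta_-}{\cos\theta_-\sin w}=\frac{\Lambda^2}{\sqrt{1+\Lambda^2}}=:c,
\]
we parametrize the arc by
\[
\alpha(\tau)=\sqrt{1-\Lambda^4}\sqrt{1-\tau^2},\qquad \beta(\tau)=\sqrt{1-\Lambda^4}\,(\Lambda\tau-c).
\]
The condition $\beta\ge0$ is exactly $\tau\ge \Lambda/\sqrt{1+\Lambda^2}=:\tau_0$, and $\alpha\ge 0$ forces $\tau\le1$. This matches the claimed form of $(\alpha_*,\beta_*)$. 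It therefore suffices to maximize
\[
g(\tau)=\alpha(\tau)^{2/3}+\beta(\tau)^{2/3}\quad\text{on }[\tau_0,1].
\]

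\textbf{Step 2: the critical-point equation.} On $(\tau_0,1)$ both $\alpha$ and $\beta$ are positive. The equation $g'(\tau)=0$ reads $\alpha^{-1/3}\alpha'+\beta^{-1/3}\beta'=0$, with
\[
\alpha'=-\sqrt{1-\Lambda^4}\,\frac{\tau}{\sqrt{1-\tau^2}},\qquad \beta'=\sqrt{1-\Lambda^4}\,\Lambda .
\]
Cubing gives $\Lambda^3\alpha(1-\tau^2)^{3/2}=\tau^3\beta$. Substituting the parametrization yields
\[
\Lambda^3(1-\tau^2)^2=\tau^3(\Lambda\tau-c).
\]
After dividing by $\Lambda$ and rearranging, this is exactly $k_w(\tau)=0$. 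Cubing loses nothing, because both sides have fixed signs on the open interval.

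\textbf{Step 3: endpoints and uniqueness.} At $\tau_0$ we have $\beta=0$, and $\beta^{-1/3}\beta'\to+\infty$, so $g'>0$ just to the right of $\tau_0$. At $\tau=1$ we have $\alpha=0$, and $\alpha^{-1/3}\alpha'\to-\infty$, so $g'<0$ near $1$. Hence the maximum is interior and is a root of $k_w$.

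The sign of $g'$ equals the sign of $-k_w$ up to a positive factor. Consistently with this, $k_w(1)=1-\Lambda/\sqrt{1+\Lambda^2}>0$. A direct computation at $\tau_0$ should give $k_w(\tau_0)<0$, using $\Lambda\in(0,1)$ for $0<w<\pi/2$.

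For uniqueness, I would show that $k_w$ is strictly increasing on $[\tau_0,1]$. Its derivative is
\[
k_w'(x)=x\Bigl(4(1-\Lambda^2)x^2-3c\,x/\Lambda+4\Lambda^2\Bigr),
\]
so it suffices to check that the quadratic in parentheses is positive there. If that fails in some range, I would instead use the equivalent form $\Lambda^3(1-\tau^2)^2\tau^{-3}=\Lambda\tau-c$: its left side is strictly decreasing in $\tau$ and its right side strictly increasing, which gives at most one crossing immediately. This alternative is the cleaner route, and I expect to use it.

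The main obstacle is only the bookkeeping in Steps 1 and 2: verifying that \eqref{one} rewrites as \eqref{ellipse} with exactly these constants, and matching signs so that the cleared equation is $k_w$ rather than a spurious variant. The monotone-crossing argument then settles both existence and uniqueness of $\tau(w)$, and $(\alpha_*,\beta_*)=(\alpha(\tau(w)),\beta(\tau(w)))$.
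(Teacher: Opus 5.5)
Your proof is correct and follows the paper's route: you restrict to the elliptic arc, parametrize by $\tau=\sin\varphi$, and clear the critical-point equation to $k_w(\tau)=0$ with $g'$ having the sign of $-k_w$. The one real difference is the uniqueness step. Your decreasing-versus-increasing comparison of $\Lambda^3(1-\tau^2)^2\tau^{-3}$ and $\Lambda\tau-c$ is cleaner than the paper's bare assertion that the quadratic factor of $k_w'$ is positive on $[0,1]$. That assertion is true, but verifying it takes a two-case check when $\Lambda$ is near $1$. There is also a small slip: $\frac{\cos w\sin\theta_-}{\cos\theta_-\sin w}=\frac{\Lambda}{\sqrt{1+\Lambda^2}}$, so your $c=\frac{\Lambda^2}{\sqrt{1+\Lambda^2}}$ is actually $\frac{\cos w\sin\theta_-}{\sin w}$. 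Your parametrization of $\beta$ is nonetheless correct.
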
 
\begin{proof} Since \(x\mapsto x^{2/3}\) is increasing, the maximum is attained on the boundary ellipse of $\mathcal{A}_w$, described in \eqref{ellipse}. We parametrize this boundary by \begin{align*}
    \alpha(\varphi) &=\sin w\cos \varphi=\sqrt{1-\Lambda^4}\cos\varphi,\\ 
    \beta(\varphi)&=-\cos w\sin\theta_-+\cos\theta_-\sin w\sin \varphi=-\Lambda^2\sqrt{1-\Lambda^2}+\Lambda\sqrt{1-\Lambda^4}\sin\varphi, 
\end{align*} where $ \varphi\in\left[\sin^{-1}\frac{\Lambda}{\sqrt{1+\Lambda^2}},\frac{\pi}{2}\right]$. Set \(x=\sin \varphi\). Note that $\frac{\Lambda}{\sqrt{1+\Lambda^2}}\le x\le1$, and a direct computation gives 
\begin{align*}
    f_w(x) &:= ((\alpha(\sin^{-1}x))^{2/3}+(\beta(\sin^{-1}x))^{2/3})^{3/2}
\\
&= \sqrt{1-\Lambda^4} \left( (1-x^2)^{1/3} + \left(\Lambda x-\frac{\Lambda^2}{\sqrt{1+\Lambda^2}}\right)^{2/3} \right)^{3/2}.
\end{align*} It is enough to maximize $g_w(x):=\left(\frac{f_w(x)}{\sqrt{1-\Lambda^4}}\right)^{2/3}$. Differentiating, we see that \(g_w'(x)=0\) is equivalent to \(k_w(x)=0\). Moreover, \(g_w'\) has the opposite sign to \(k_w\). Now $ k_w\left(\frac{\Lambda}{\sqrt{1+\Lambda^2}}\right)<0$ and $k_w(1)>0$.
This implies the existence of $\tau$ by the intermediate value theorem.
It remains to show uniqueness. We have \[ k_w'(x) = x\left( 4(1-\Lambda^2)x^2 -\frac{3\Lambda}{\sqrt{1+\Lambda^2}}x +4\Lambda^2 \right). \] The quadratic factor is positive on \([0,1]\). Hence \(k_w\) is strictly increasing on the relevant interval. Thus \(k_w\) has a unique root \(\tau(w)\), and the first derivative test shows that the maximum is attained there. \end{proof}

\begin{remark}
Substituting \((\alpha,\beta)=\sin(w/2)(x,y)\), where $x, y\ge 0$, into \eqref{ellipse} gives
\begin{align}\label{normalise}
    \frac{x^2}{2(1+\cos w)}+\frac{(y+\sqrt{2}\cos w)^2}{2\cos w(1+\cos w)}\le 1.
\end{align}
In particular, as $w\to 0^+$, this becomes
$x^2+(y+\sqrt2)^2\le4$.
Thus the rescaled spherical constraint recovers the Euclidean constraint in
\cite{original}. On the other hand, as \(w\to\pi/2^-\), we have
\(\Lambda=\sqrt{\cos w}\to0\), so \(k_w(x)\to x^4\) and the critical root satisfies
\(\tau(w)\to0\). Hence
$
\lim_{w\to\pi/2^-}\sigma_u(w)=\frac1{\sqrt2}$,
which is consistent with the construction degenerating to an orthant.
\end{remark}

Since the formula for $\sigma_u$ is not explicit, it is not obvious that this quantity is indeed strictly less than $1$. We will then need to verify this. In fact, we can show something stronger.

\begin{proposition}
    For $w\in(0, \pi/2)$, define \[\mathcal{R}_w:=\{(x, y)\in\mathbb{R}^2_+ : (x, y)\text{ satisfies \eqref{normalise}}\}.\]
    If $0<{w}'\le w<\pi/2$, then $\mathcal{R}_w\subset \mathcal{R}_{{w}'}$. Hence $\sigma_u(w)$ is decreasing over $(0, \pi/2)$.
\end{proposition}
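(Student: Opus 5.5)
The plan is to rewrite the normalized constraint \eqref{normalise} in a form where its dependence on $w$ is visibly monotone, and then to transfer the resulting inclusion $\mathcal{R}_w\subset\mathcal{R}_{w'}$ to $\sigma_u$ using the homogeneity of the objective $(\alpha^{2/3}+\beta^{2/3})^{3/2}$.

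\emph{Step 1: rewrite the constraint.} Put $c:=\cos w\in(0,1)$. Multiply \eqref{normalise} by $2(1+c)$ to get
\[
x^2+\frac{(y+\sqrt2\,c)^2}{c}\le 2(1+c).
\]
Expanding $(y+\sqrt2 c)^2/c=y^2/c+2\sqrt2\,y+2c$ and cancelling the $2c$ on both sides, this becomes
\[
x^2+2\sqrt2\,y+\frac{y^2}{c}\le 2 .
\]
Hence
\[
\mathcal{R}_w=\Bigl\{(x,y)\in\mathbb{R}_+^2:\ x^2+2\sqrt2\,y+\frac{y^2}{\cos w}\le 2\Bigr\}.
\]

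\emph{Step 2: prove the inclusion.} If $0<w'\le w<\pi/2$, then $\cos w'\ge\cos w>0$. So for every $y\ge0$ we have $y^2/\cos w'\le y^2/\cos w$. Therefore any $(x,y)\in\mathbb{R}_+^2$ satisfying the constraint for $w$ also satisfies it for $w'$, which gives $\mathcal{R}_w\subset\mathcal{R}_{w'}$. I would avoid differentiating the original form \eqref{normalise} in $c$ term by term. That derivative does not have a fixed sign for small $y$, so the simplification in Step 1 is the key point of the argument.

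\emph{Step 3: transfer to $\sigma_u$.} The substitution $(\alpha,\beta)=\sin(w/2)(x,y)$ maps $\mathcal{R}_w$ bijectively onto $\mathcal{A}_w$, by the derivation of \eqref{normalise} from \eqref{ellipse}. The function $(\alpha,\beta)\mapsto(\alpha^{2/3}+\beta^{2/3})^{3/2}$ is $1$-homogeneous, so
\[
\frac{L_-(w)}{2\sin(w/2)}=\frac12\sup_{(x,y)\in\mathcal{R}_w}(x^{2/3}+y^{2/3})^{3/2}.
\]
By Step 2 the right-hand side is nonincreasing in $w$, since the supremum is taken over shrinking sets. In \eqref{deducesigup}, $\sigma_u(w)$ is the maximum of this quantity and the constant $\frac{\sin\theta_-}{2\sin(w/2)}=\frac1{\sqrt2}$. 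The maximum of a nonincreasing function and a constant is nonincreasing, so $\sigma_u$ is decreasing on $(0,\pi/2)$, in the non-strict sense.

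No step here is a real obstacle. The only care needed is in Step 1, where the cancellation of the $2c$ terms makes the monotonicity in $w$ evident.
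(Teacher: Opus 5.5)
Your proof is correct and follows essentially the paper's route. The paper clears denominators to get $G_w(x,y)=(\cos w)(x^2+2\sqrt2\,y-2)+y^2\le 0$ and compares $G_{w'}$ with $G_w$, tacitly using $x^2+2\sqrt2\,y-2\le 0$ on $\mathcal{R}_w$; your division by $\cos w$ makes the monotonicity immediate, and both arguments then pass to $\sigma_u$ via the rescaling $(\alpha,\beta)=\sin(w/2)(x,y)$ and nested suprema. The paper additionally extracts $L_-(w)\ge\sin\theta_-$ (from this monotonicity and the limit $1/\sqrt2$ as $w\to\pi/2^-$) to drop the maximum in \eqref{deducesigup}, while you simply note that a maximum with a constant stays nonincreasing.
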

\begin{proof}
    Let $(x, y)\in \mathcal{R}_{w}$. Then by definition, this is equivalent to
    \[G_w(x, y):=(\cos w)x^2+y^2+2\sqrt{2}(\cos w)y-2\cos w\le 0. \]
    So we have
    \begin{align*}
        G_{w'}(x, y)=G_{w}(x, y)+(\cos{w}'-\cos w)(x^2+2\sqrt{2}y-2)\le G_w(x, y)\le 0,
    \end{align*}
    implying $(x, y)\in\mathcal{R}_{{w}'}$. Lastly, we note that
    \[\frac{L_-(w)}{2\sin(w/2)}=\frac{1}{2}\sup_{(x, y)\in\mathcal{R}_w}(x^{2/3}+y^{2/3})^{3/2}.\]
    Indeed, this quantity decreases as $w$ increases, by viewing it as optimizing the same function over nested domains.
    We also deduce that $L_-(w)\ge \sin\theta_-$ for $w\in(0, \pi/2)$, allowing us to simplify \eqref{deducesigup}. Hence $\sigma_u(w)=\frac{L_-(w)}{2\sin(w/2)}$ and the assertion thus follows.
\end{proof}

\section{Upper bounds above the central width}\label{sec:upper-bound-large-width-case}

\subsection{Dual construction}

First notice that the construction in Section~\ref{sec:upper-bound-small-width-case} is not applicable for $w\in(\pi/2, \pi)$ anymore since $\mathbb{B}^n(w)$ is not even convex. This thus requires a new technique.
Let \(w\in(\pi/2,\pi)\) and put \({w}':=\pi-w\in(0,\pi/2)\). By the previous construction, there exists a spherical convex body \(M_{{w}'}\subset \mathbb S^n\) of constant width \(w'\). Define its {spherical dual} by
\[
M_{w'}^*:=\{y\in \mathbb S^n:\langle y,x\rangle\ge0 \text{ for all }x\in M_{w'}\}=\bigcap_{x\in M_{w'}}H_x.
\]
For more information related to this concept, see \cite[Lemma~4.1]{BW2016}.
By \cite{HW}, \(M_w:=M_{w'}^*\) is a spherical convex body of
constant width \(\pi-w'=w\). Thus the large-width case can be obtained from the small-width one. Again, from now on, $M$ stands for $M_w$.

\begin{lemma}\label{bounddual}
    Let $u\in\mathbb{S}^{n-1}$. Decompose $u=u^+-u^-$ with $\cos\varphi=\|u^+\|$ and $\sin\varphi=\|u^-\|$ where $0\le\varphi\le\pi/2$. Let $v=\exp_p(r(-u))=(\cos r, -(\sin r)u)\in M$ with $r\ge 0$. Denote $\theta_+:=\cos^{-1}\sqrt{\cos w'}$. If $u^+, u^-\ne 0$, then we have
    \[r\le \rho(\varphi):=\tan^{-1}\frac{\cot\theta_+}{\cos\varphi}.\]
\end{lemma}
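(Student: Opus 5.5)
The idea is to use the definition of the dual directly. Since $v\in M=M_{w'}^*$, we have $\langle v,x\rangle\ge 0$ for every $x\in M_{w'}$. Testing this against one well-chosen point $x\in M_{w'}$ should already force the stated bound on $r$.

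\textbf{Step 1: $r<\pi/2$.} Take $x=p$. Since $p\in M_{w'}$ (its parameters $(0,0)$ lie in $\mathcal{A}_{w'}$), we get $\langle v,p\rangle=\cos r\ge 0$, so $r\le\pi/2$. For general $x=(x_1,x_\perp)\in M_{w'}$ the dual condition reads
\[
x_1\cos r-\sin r\,\langle u,x_\perp\rangle\ge 0 .
\]
If $r=\pi/2$, this becomes $\langle u,x_\perp\rangle\le 0$ for all such $x$. That will be contradicted by the point chosen in Step 2, which has $\langle u,x_\perp\rangle>0$. Hence $0\le r<\pi/2$, and the condition is equivalent to
\[
\tan r\le \frac{x_1}{\langle u,x_\perp\rangle}\quad\text{whenever } \langle u,x_\perp\rangle>0 .
\]

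\textbf{Step 2: the test point.} By Lemma~\ref{constraintlemma} (applied with width $w'$), a point of $M_{w'}$ has the form $x=(\sqrt{1-a^2-b^2},\,x^+-x^-)$ with $\|x^+\|=a$, $\|x^-\|=b$, $(a,b)\in\mathcal{A}_{w'}$. The supports of $x^+$ and $x^-$ are disjoint sets of coordinates, and these can be chosen freely. I take $b=0$ and $a=\sqrt{1-\cos w'}$, the positive $\alpha$-intercept of $\mathcal{A}_{w'}$ already identified in the proof of Proposition~\ref{constantwidth}. I set $x^+:=a\,u^+/\|u^+\|$, which is allowed because $u^+\neq0$.

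Membership of this $x$ in $M_{w'}$ can be checked directly from \eqref{one}, or from the boundary case $M\cap T_1=\mathbb{B}^n(\theta_-)\cap T_1$. Indeed, $x_1=\sqrt{\cos w'}=\cos\theta_+$, so $d(p,x)=\theta_+$ and $x\in L_1\subset M_{w'}$.

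\textbf{Step 3: the computation.} Since $u^-$ is orthogonal to $u^+$,
\[
\langle u,x_\perp\rangle=a\,\|u^+\|=\sqrt{1-\cos w'}\,\cos\varphi>0 .
\]
Therefore
\[
\tan r\le\frac{\sqrt{\cos w'}}{\sqrt{1-\cos w'}\,\cos\varphi}=\frac{\cot\theta_+}{\cos\varphi},
\]
using $\cos\theta_+=\sqrt{\cos w'}$ and $\sin\theta_+=\sqrt{1-\cos w'}$. Because $r\in[0,\pi/2)$ and $\tan^{-1}$ is increasing, this gives $r\le\rho(\varphi)$.

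The only delicate point is confirming that the chosen test point really lies in $M_{w'}$ and that $r<\pi/2$ (Steps 1–2). The rest is the single inner-product computation in Step 3. The hypothesis $u^-\neq0$ is not needed for this bound; presumably it only serves to keep $\varphi<\pi/2$, so that $\rho(\varphi)$ is finite.
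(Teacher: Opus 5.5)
Your proof is correct and is essentially the paper's argument: the paper tests the dual condition against the same point $z=(\cos\theta_+,(\sin\theta_+)u^+/\|u^+\|)\in L_1\subset M_{w'}$ and derives $\cos r\cos\theta_+-\sin r\sin\theta_+\cos\varphi\ge0$. Your explicit exclusion of $r=\pi/2$ before dividing by $\cos r$ is a small gain in rigor over the paper's bare remark $r\le\pi/2$, but one correction to your closing aside: it is $u^+\neq0$ (which you do use) that keeps $\varphi<\pi/2$ and $\rho(\varphi)$ finite, whereas $u^-\neq0$ only excludes $\varphi=0$, the boundary orthant $T_2$ treated in Remark~\ref{boundary}.
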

\begin{proof}
First note that $r\le \pi/2$.
    Since $v\in M$, $\langle v, z\rangle\ge 0$ for all $z\in M_{w'}$. In particular, pick
    \[z=\exp_p(\theta_+ u_0)=(\cos\theta_+, (\sin\theta_+) u_0)\in L_1\subset M_{w'},\]
    where $u_0:=u^+/\|u^+\|$. This thus gives
\begin{align*}
    \langle v, z\rangle=\cos r\cos\theta_+-\sin r\sin\theta_+\cos\varphi\ge 0 \quad \Rightarrow\quad \tan r\le \frac{\cot\theta_+}{\cos\varphi}
\end{align*}
as desired. 
\end{proof}

Intuitively, \(\exp_p(r(-u))\) is the point reached by starting at \(p\) and traveling geodesic distance \(r\) in the direction \(-u\). Lemma~\ref{bounddual} shows that the positive and negative components of \(u\) restrict how far one may travel in that direction while remaining in \(M\) (or slightly out).

\begin{remark}\label{boundary}
    It is not hard to see that
    \begin{align*}
       M\cap T_1=\mathbb{B}^n\left(w-\frac{\pi}{2}+\theta_+\right)\cap T_1\quad\text{and}\quad
   M\cap T_2=\mathbb{B}^n\left(\frac{\pi}{2}-\theta_+\right)\cap T_2.\end{align*}
\end{remark}

\subsection{Effective radius computation}

This step is similar to the previous section. Consider a $(k, n-k)$ orthant $Q$. If $k=0$ or $k=n$, refer to Remark~\ref{boundary}.
Now fix $1\le k\le n-1$. After reordering coordinates, we can parametrize a cover $K_Q\supset M\cap (-Q)$, using Lemma~\ref{bounddual}, as
\begin{align*}
    \Psi_Q(r, \varphi, s, t):=(\cos r, -\sin r\cos\varphi \psi_1(s), \sin r\sin\varphi\psi_2(t)),
\end{align*}
with $0\le r\le \rho(\varphi)$ and $0\le\varphi\le\pi/2$. In this case, we find
\begin{align*}
    G(\Psi_Q)=\begin{pmatrix}
        B & O & O\\
        O & \sin^2r\cos^2\varphi (G(\psi_1)) & O\\
        O & O & \sin^2r\sin^2\varphi(G(\psi_2))
    \end{pmatrix},\quad B:=\begin{pmatrix}
        1 & 0\\ 0 & \sin^2r
    \end{pmatrix}.
\end{align*}
By computing $\sqrt{\det G(\Psi_Q)}$ directly, we have the volume setup
\begin{align*}
    \mu_n(K_Q)= \frac{\omega_k\omega_{n-k}}{2^n}\int_0^{\pi/2}\int_0^{\rho(\varphi)}\cos^{k-1}\varphi\sin^{n-k-1}\varphi \sin^{n-1}r\, dr\, d\varphi.
\end{align*}
Indeed, $\mu_n(M\cap(-Q))\le\mu_n(K_Q)$. Therefore if $\mathcal{K}:=\bigcup_QK_Q$, then  $\mu_n(M)\le \mu_n(\mathcal{K})$. Analogously, we find $\mu_n(\mathcal{K})=W_1+W_2$ where
\begin{align*}
    W_1=\frac{\omega_n}{2^n}\left(\int_0^{w-\frac{\pi}{2}+\theta_+}\sin^{n-1}x\, dx+\int_0^{\frac{\pi}{2}-\theta_+}\sin^{n-1}x\, dx\right)
\end{align*}
and
\begin{align*}
    W_2=\sum_{k=1}^{n-1}\binom{n}{k}\frac{\omega_k\omega_{n-k}}{2^n}\int_0^{\pi/2}\cos^{k-1}\varphi\sin^{n-k-1}\varphi \left(\int_0^{\rho(\varphi)}\sin^{n-1}rdr\right)\, d\varphi.
\end{align*}
For convenience, denote $L_+(w):=\sup_{\varphi\in[0, \pi/2]}h_w(\varphi)$ where
\[h_w(\varphi):=(\cos^{2/3}\varphi+\sin^{2/3}\varphi)^{3/2}\sin \rho(\varphi)=\frac{\cot\theta_+ (\cos^{2/3}\varphi+\sin^{2/3}\varphi)^{3/2}}{\sqrt{\cos^2\varphi+\cot^2\theta_+}}.\]
By the same asymptotic argument, we yield
\begin{align*}
    \left(\frac{W_1}{\mu_n(\mathbb{B}^n(w/2))}\right)^{1/n}\sim \frac{\sin(w-\frac{\pi}{2}+\theta_+)}{2\sin(w/2)}
\quad\text{and}\quad
    \left(\frac{W_2}{\mu_n(\mathbb{B}^n(w/2))}\right)^{1/n} &\lesssim \frac{L_+(w)}{2\sin(w/2)}.
\end{align*}
Thus we have
\[\sighigh(w)\le\frac{1}{2\sin(w/2)}\max\left\{\sin\left(w-\frac{\pi}{2}+\theta_+\right), L_+(w)\right\}=:\sigma_u(w).\]
Finally we solve for $L_+$.

\begin{proposition}
The value of $L_+(w)$ is attained at
\begin{align*}
    \varphi_*=\cos^{-1}\left(\left(\frac{\cos^3w+\sqrt{-\cos^3w}}{1+\cos^3w}\right)^{1/2}\right).
\end{align*}
\end{proposition}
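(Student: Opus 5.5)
We will find the maximizer by a one-variable calculus argument. We work with $\log h_w$ and the variable $a:=\cos^2\varphi\in[0,1]$, writing $b:=1-a=\sin^2\varphi$. Throughout, set $\lambda:=-\cos w=\cos w'=\cos^2\theta_+\in(0,1)$, so that $c^2:=\cot^2\theta_+=\lambda/(1-\lambda)$. Then
\[
\log h_w=\log c+\tfrac32\log\bigl(a^{1/3}+b^{1/3}\bigr)-\tfrac12\log\bigl(a+c^2\bigr),
\]
and the claimed maximizer corresponds to $a_*=\cos^2\varphi_*=\lambda^{3/2}/(1+\lambda^{3/2})$. This follows from the identity $(\cos^3w+\sqrt{-\cos^3w})/(1+\cos^3w)=(s-s^2)/(1-s^2)=s/(1+s)$ with $s=\lambda^{3/2}$.

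\emph{Step 1: the critical-point equation.} Differentiating in $a$ on $(0,1)$ gives
\[
\frac{a^{-2/3}-b^{-2/3}}{2\,(a^{1/3}+b^{1/3})}=\frac{1}{2(a+c^2)} .
\]
Multiply by $a^{2/3}b^{2/3}$ and cancel the common factor $b^{1/3}+a^{1/3}$ from $b^{2/3}-a^{2/3}$. This leaves
\[
\bigl(b^{1/3}-a^{1/3}\bigr)(a+c^2)=a^{2/3}b^{2/3}.
\]
Next put $p=a^{1/3}$, $q=b^{1/3}$ and $r=p/q\in[0,\infty)$, and use $q^3=1/(1+r^3)$ together with $a=r^3q^3$. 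The equation becomes the polynomial equation
\[
P(r):=(1-r)\bigl(r^3+c^2(1+r^3)\bigr)-r^2=0 .
\]
Any root must satisfy $r<1$, since the right-hand side $a^{2/3}b^{2/3}$ is positive.

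\emph{Step 2: identify the root.} We check directly that $r=t:=\sqrt\lambda$ is a root. Using $c^2=t^2/(1-t^2)$ and dividing by $t^2/(1+t^3)$, the condition reduces to
\[
t(1-t)+(1-t+t^2)=1,
\]
which holds identically. Then $a/b=r^3=\lambda^{3/2}$ gives $a_*=\lambda^{3/2}/(1+\lambda^{3/2})$, which is the stated $\varphi_*$.

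\emph{Step 3: uniqueness and maximality (main obstacle).} We must show that $P$ has no other root in $(0,1)$, and that $h_w(\varphi_*)$ beats both endpoint values $h_w(0)=c/\sqrt{1+c^2}=\sqrt\lambda$ and $h_w(\pi/2)=1$. We would first factor $P(r)=(r-t)Q(r)$ and show that $Q$ has constant sign on $[0,1]$. Failing that, we would show that $P$ is strictly decreasing on $[0,1]$: we have $P(0)=c^2>0$ and $P(1)=-1<0$, and $P'$ should be negative there after grouping terms. With a unique sign change, $(\log h_w)'$ is positive for $a<a_*$ and negative for $a>a_*$, because the sign of the derivative matches the sign of $P$ after the positive rescalings above. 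This makes $a_*$ the global maximum on $[0,1]$, and the endpoint comparison follows automatically. Verifying this monotonicity, or the sign of the cofactor $Q$, for all $\lambda\in(0,1)$ is the step we expect to require the most care.
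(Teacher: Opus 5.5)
Your argument is essentially the paper's. Both run a one-variable first-derivative test: the paper uses $x=\cos\varphi$ and $h_w^2$, while you use $a=\cos^2\varphi$ and $\log h_w$. Your root $r=\sqrt{\lambda}$ is exactly the paper's reduced critical equation $x^{4/3}(1-x^2)^{-2/3}=-\cos w$, i.e.\ $(a/b)^{2/3}=\lambda$.

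The step you flagged as the main obstacle closes at once, since
\[
(1-\lambda)P(r)=(1-r)(r^3+\lambda)-(1-\lambda)r^2=(\lambda-r^2)(r^2-r+1).
\]
So your cofactor is $Q(r)=-(r+t)(r^2-r+1)/(1-\lambda)<0$ on $[0,\infty)$, and $P$ has the sign of $t-r$. This gives uniqueness of the critical point and the global maximum, with no endpoint comparison needed.
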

\begin{proof}
It suffices to consider $h_w^2(\varphi)$. Substituting $x=\cos\varphi$ gives
\[\ell_w(x):=h_w^2( \cos^{-1}x)=\frac{-\cos w(x^{2/3}+(1-x^2)^{1/3})^3}{(1+\cos w)x^2-\cos w}\]
where $x\in[0, 1]$. Computing $\ell_w'(x)$ with some simplification, we need to solve
\begin{align*}
    \cos w+x^{4/3}(1-x^2)^{-2/3}=0\quad\Leftrightarrow\quad \frac{x^4}{(1-x^2)^2}=-\cos^3 w.
\end{align*}
Indeed, we can write this as a quadratic equation (by $y=x^2$) and solve it directly. By the first derivative test, one can check that this unique critical point is the maximum.
\end{proof}

\begin{remark}
   In fact, by direct substitution, we have 
    \[L_+(w)=1+\sqrt{-\cos w}\ge \sin\left(w-\frac{\pi}{2}+\theta_+\right)\quad\text{and}\quad\sigma_{u}(w)=\frac{1+\sqrt{-\cos w}}{\sqrt{2(1-\cos w)}}.\]
    Indeed, one can check that $\sigma_u$ is increasing on $(\pi/2, \pi)$.  Also, note the boundary cases: \[\lim_{w\to\frac{\pi}{2}^+}\sigma_u(w)=\frac{1}{\sqrt{2}}\quad\text{and}\quad\lim_{w\to \pi^-}\sigma_u(w)=1.\]
\end{remark}

We thus have the upper bound $\sigma_u$ proposed in Theorem \ref{thm:main-intro}.

\section{Schramm-type lower bounds}
\label{sec:lower}

\subsection{Ball-containment argument}

We adopt an argument similar to \cite{schr}.
Define the map $T : \mathbb{R}^{n+1}\to\mathbb{R}^{n+1}$ as the symmetry about the first coordinate axis: \[T(x_1,x_2,\dots,x_{n+1})=(x_1,-x_2,\dots,-x_{n+1}).\]

In addition, below $\mathrm{cl}(\cdot)$ and $\mathrm{int}(\cdot)$ stand for, respectively, the closure and the interior of a set on a sphere (with respect to the subspace topology). We first prove the following.

\begin{lemma}\label{shortest}
    Suppose $0<\alpha,\varepsilon<\pi/2$, $0<\gamma<\pi$ and $x\in\mathbb{S}^n$ is such that $d(x,p)=\varepsilon$ (then also $d(Tx,p)=\varepsilon$). Assume $\gamma\le\alpha+\varepsilon$ and $\cos\gamma\le\cos\alpha\cos\varepsilon$. Let
    \[
    A_1:=\mathbb{B}^n(\alpha)\cap\mathrm{cl}\left(\mathbb{S}^n\setminus \mathbb{B}^n(x,\gamma)\right)
    \quad\text{and}\quad
    A_2:=\mathbb{B}^n(\alpha)\cap\mathrm{cl}\left(
    \mathbb{S}^n\setminus \mathbb{B}^n(Tx,\gamma)
    \right).
    \]
    Then we have
    \[d(A_1, A_2):=\inf\{d(a_1, a_2) : a_1\in A_1, a_2\in A_2\}= 2\sin^{-1}\left(\frac{\cos\alpha\cos\varepsilon-\cos\gamma}{\sin\varepsilon}\right).\]
\end{lemma}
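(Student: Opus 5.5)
The plan is to reduce everything to linear inequalities in coordinates. Then I will bound the chord length $\|a_1-a_2\|$ from below using a single coordinate, and exhibit a pair of points that attains the bound.

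\emph{Normalization and description of the sets.} By a rotation fixing $p=e_1$ and commuting with $T$ (any orthogonal map acting only on coordinates $2,\dots,n+1$), we may assume
\[
x=(\cos\varepsilon,\sin\varepsilon,0,\dots,0),\qquad Tx=(\cos\varepsilon,-\sin\varepsilon,0,\dots,0).
\]
Since $0<\gamma<\pi$, the set $\mathrm{cl}(\mathbb{S}^n\setminus\mathbb{B}^n(x,\gamma))$ equals $\{y:\langle y,x\rangle\le\cos\gamma\}$: it is the closure of the open cap $\{\langle y,x\rangle<\cos\gamma\}$, which is nonempty and not all of $\mathbb{S}^n$. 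Hence
\[
A_1=\{y\in\mathbb{S}^n:\ y_1\ge\cos\alpha,\ y_1\cos\varepsilon+y_2\sin\varepsilon\le\cos\gamma\},
\]
and $A_2$ is the same set with $y_2$ replaced by $-y_2$. Put
\[
c:=\frac{\cos\alpha\cos\varepsilon-\cos\gamma}{\sin\varepsilon},
\]
which is $\ge 0$ by the hypothesis $\cos\gamma\le\cos\alpha\cos\varepsilon$.

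\emph{Lower bound.} Take $a\in A_1$. Since $\cos\varepsilon>0$ and $a_1\ge\cos\alpha$, we get
\[
a_2\sin\varepsilon\le\cos\gamma-a_1\cos\varepsilon\le\cos\gamma-\cos\alpha\cos\varepsilon,
\]
so $a_2\le -c$. Symmetrically, every $b\in A_2$ has $b_2\ge c$. Therefore
\[
2\sin\bigl(d(a,b)/2\bigr)=\|a-b\|\ge |b_2-a_2|\ge 2c.
\]
Since $c\le 1$ (shown below) and $d(a,b)/2\in[0,\pi/2]$, this gives $d(a,b)\ge 2\sin^{-1}c$.

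\emph{Attainment.} We need $c\le\sin\alpha$. This is equivalent to
\[
\cos\alpha\cos\varepsilon-\sin\alpha\sin\varepsilon\le\cos\gamma,
\]
that is, $\cos(\alpha+\varepsilon)\le\cos\gamma$. This follows from $\gamma\le\alpha+\varepsilon<\pi$ and the monotonicity of cosine on $[0,\pi]$, and in particular it also gives $c\le 1$. Now set
\[
a_1^*=\Bigl(\cos\alpha,\,-c,\,\sqrt{\sin^2\alpha-c^2},\,0,\dots,0\Bigr),
\]
which uses $n\ge2$, and let $a_2^*$ be the same point with the sign of the second coordinate flipped. Both are unit vectors with first coordinate $\cos\alpha$, so they lie in $\mathbb{B}^n(\alpha)$. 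Moreover $\langle a_1^*,x\rangle=\cos\alpha\cos\varepsilon-c\sin\varepsilon=\cos\gamma$, and likewise $\langle a_2^*,Tx\rangle=\cos\gamma$. Hence $a_1^*\in A_1$ and $a_2^*\in A_2$. Finally
\[
\langle a_1^*,a_2^*\rangle=\cos^2\alpha-c^2+\sin^2\alpha-c^2=1-2c^2,
\]
so $d(a_1^*,a_2^*)=2\sin^{-1}c$, and the infimum is attained.

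\emph{Main obstacle.} The only delicate points are checking the feasibility condition $c\le\sin\alpha$ and seeing why $\gamma\le\alpha+\varepsilon$ is exactly what makes it hold. One also has to be careful that the closure of the complement of the ball is the closed cap $\{\langle y,x\rangle\le\cos\gamma\}$. Everything else is the one-coordinate projection estimate above.
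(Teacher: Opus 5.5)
Your proof is correct and follows the paper's argument: the same rotation, the same bound $a_2\le -c\le c\le b_2$ via the second coordinate, and the same extremal pair $(\cos\alpha,\mp c,u)$. You also spell out two points the paper leaves implicit: that the closure is the closed cap $\{\langle y,x\rangle\le\cos\gamma\}$, and that $\sin^2\alpha-c^2\ge 0$ holds because $\gamma\le\alpha+\varepsilon<\pi$ gives $\cos(\alpha+\varepsilon)\le\cos\gamma$.
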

\begin{proof} After a rotation, assume \(x=\cos\varepsilon \, e_1+\sin\varepsilon\, e_2\), and \(Tx=\cos\varepsilon \, e_1-\sin\varepsilon \, e_2\). Put \[ q(\alpha, \varepsilon, \gamma):=\frac{\cos\alpha\cos\varepsilon-\cos\gamma}{\sin\varepsilon}\ge0. \] Take \(y=(a,b,u)\in A_1\) and \(z=(a', b', v)\in A_2\) with $a,a',b,b'\in\mathbb{R}$ and $u, v\in\mathbb{R}^{n-1}$. Since \(y\in\mathbb B^n(\alpha)\), we have \(a\ge\cos\alpha\), while \(y\notin\mathbb B^n(x,\gamma)\) gives \[a\cos\varepsilon+b\sin\varepsilon\le\cos\gamma.\] Hence $b\le -q$. Similarly, \(z\in A_2\) implies \( b'\ge q\). Therefore \[ \|y-z\|\ge |b'-b|\ge 2q. \] Since \(\|y-z\|=2\sin(d(y,z)/2)\), we obtain \(d(y,z)\ge 2\sin^{-1}q\). Taking the infimum over \(y\in A_1\) and \(z\in A_2\) gives the lower bound. Finally the equality is achieved by taking $y=(\cos\alpha, -q, u)$ and $z=(\cos\alpha, q, u)$ with $\|u\|^2=1-\cos^2\alpha-q^2\ge 0$.
\end{proof}

\begin{remark}
    When $\gamma>\alpha+\varepsilon$, we have $A_1, A_2=\emptyset$. If $\cos\gamma\ge \cos\alpha\cos\varepsilon$, then $A_1\cap A_2\ne\emptyset$, implying $d(A_1, A_2)=0$. 
\end{remark}

In the next step, for $A\subset\mathbb{S}^n$ and $\gamma>0$, denote
\[
A^\gamma:=\bigcap_{x\in A}\mathbb{B}^n(x,\gamma).
\]

\begin{proposition}\label{ballcontainedlem}
Let \(0<\alpha,\delta<\pi/2\) and \(0<\gamma<\pi\). Suppose \(A\subset\mathbb B^n(\alpha)\subset\mathbb S^n\) and \(\mathrm{diam}(A)\le\delta\). Put \[ \beta(\alpha, \gamma, \delta):= \cos^{-1}\left( \frac{\cos\alpha\cos\gamma+\sin\frac{\delta}{2} \sqrt{\cos^2\alpha-\cos^2\gamma+\sin^2\frac{\delta}{2}}} {\cos^2\alpha+\sin^2\frac{\delta}{2}} \right), \] whenever it is well-defined. Assume moreover that, for every \(0\le\varepsilon\le\beta\), either $\gamma>\alpha+\varepsilon$ or the hypotheses of Lemma~\ref{shortest} are satisfied. Then $A^\gamma\cup (TA)^\gamma\supset \mathrm{int}(\mathbb B^n(\beta))$, so \[ \mu_n(A^\gamma)\ge \frac{1}{2}\mu_n(\mathbb B^n(\beta)). \]
\end{proposition}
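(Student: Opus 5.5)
The plan is Schramm's illumination-type contradiction argument, run pointwise in $\mathrm{int}(\mathbb B^n(\beta))$. Fix $y\in\mathrm{int}(\mathbb B^n(\beta))$ and put $\varepsilon:=d(y,p)<\beta$. I will show that $y\in A^\gamma$ or $y\in (TA)^\gamma$.

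Suppose, to the contrary, that $y\notin A^\gamma$ and $y\notin (TA)^\gamma$.
\begin{itemize}
\item From $y\notin A^\gamma$ there is $a\in A$ with $d(a,y)>\gamma$. Hence $a\in \mathbb B^n(\alpha)\cap \mathrm{cl}(\mathbb S^n\setminus \mathbb B^n(y,\gamma))$, which is the set $A_1$ of Lemma~\ref{shortest} with $x=y$.
\item From $y\notin (TA)^\gamma$ there is $b\in A$ with $d(Tb,y)>\gamma$. Since $T$ is an isometry and an involution, this says $d(b,Ty)>\gamma$, so $b\in A_2$.
\end{itemize}
If $\gamma>\alpha+\varepsilon$, then $A_1=\emptyset$, which is already a contradiction. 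Otherwise the hypotheses of Lemma~\ref{shortest} hold by assumption, and we get
\[
d(a,b)\ge 2\sin^{-1} q(\alpha,\varepsilon,\gamma),\qquad q(\alpha,\varepsilon,\gamma)=\frac{\cos\alpha\cos\varepsilon-\cos\gamma}{\sin\varepsilon}.
\]
The contradiction with $\mathrm{diam}(A)\le\delta$ will follow once we show $q(\alpha,\varepsilon,\gamma)>\sin(\delta/2)$ for all $0<\varepsilon<\beta$.

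The degenerate centre $\varepsilon=0$, i.e. $y=p$, is handled directly. Here $Ty=p$ as well, and the hypothesis at $\varepsilon=0$ forces $\gamma>\alpha$, because $\cos\gamma\le\cos\alpha$ is the alternative. Then $A\subset\mathbb B^n(\alpha)\subset\mathbb B^n(p,\gamma)$, so $p\in A^\gamma$.

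Next I check that $\beta$ is exactly the threshold value. Setting $c=\cos\beta$ and $s=\sin\beta=\sqrt{1-c^2}$, the equation $q(\alpha,\beta,\gamma)=\sin(\delta/2)$ reads $c\cos\alpha-\cos\gamma=s\sin(\delta/2)$. Squaring gives the quadratic
\[
(\cos^2\alpha+\sin^2\tfrac{\delta}{2})c^2-2c\cos\alpha\cos\gamma+\cos^2\gamma-\sin^2\tfrac{\delta}{2}=0 .
\]
Its larger root is precisely the expression inside $\cos^{-1}$ in the definition of $\beta$. I still need to confirm that this root satisfies the unsquared equation, i.e. $c\cos\alpha\ge\cos\gamma$; this should follow from the well-definedness assumption together with the Lemma~\ref{shortest} hypothesis at $\varepsilon=\beta$.

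The main obstacle is the strict inequality $q(\alpha,\varepsilon,\gamma)>q(\alpha,\beta,\gamma)$ for $\varepsilon<\beta$. I would first try monotonicity: the numerator of $\partial_\varepsilon q$ equals $\cos\gamma\cos\varepsilon-\cos\alpha$, so $q$ is strictly decreasing exactly where $\cos\gamma\cos\varepsilon<\cos\alpha$. The hypothesis $\cos\gamma\le\cos\alpha\cos\varepsilon$ gives this at once when $\cos\gamma\ge0$. When $\cos\gamma<0$ it holds trivially, since $\cos\alpha>0$.

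A gap remains for those $\varepsilon$ at which $\gamma>\alpha+\varepsilon$ and Lemma~\ref{shortest} is not used. In that range $y\in A^\gamma$ trivially, so monotonicity is only needed on the set of $\varepsilon$ where the lemma applies. I would check that this set is an interval ending at $\beta$, using that $\alpha+\varepsilon$ increases with $\varepsilon$. On that interval $q$ decreases, so $q(\alpha,\varepsilon,\gamma)>\sin(\delta/2)$, and hence $d(a,b)>\delta$, which is the desired contradiction.

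Finally, $T$ is a measure-preserving isometry of $\mathbb S^n$, and $(TA)^\gamma=T(A^\gamma)$. Hence
\[
2\mu_n(A^\gamma)\ge\mu_n\big(\mathrm{int}(\mathbb B^n(\beta))\big)=\mu_n(\mathbb B^n(\beta)),
\]
which is the claimed bound $\mu_n(A^\gamma)\ge\frac12\mu_n(\mathbb B^n(\beta))$.
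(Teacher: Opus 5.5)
Your proposal is correct and follows the paper's proof: the same contradiction setup, the trivial case $\gamma>\alpha+\varepsilon$ handled by the triangle inequality, Lemma~\ref{shortest} applied to the two witnesses in $A$, and the diameter bound $\mathrm{diam}(A)\le\delta$. The only difference is the final step: the paper squares $\cos\alpha\cos\varepsilon-\cos\gamma\le\sin\varepsilon\sin\frac{\delta}{2}$ (legitimate because the left side is nonnegative under the lemma's hypotheses) and reads $\cos\varepsilon\le\cos\beta$ directly off the resulting quadratic inequality, so your monotonicity argument and sign check at $\varepsilon=\beta$ are not needed. They do go through, however: if the lemma is invoked at some $\varepsilon<\beta$, then $\gamma\le\alpha+\beta$, and the hypothesis at $\beta$ then gives $\cos\gamma\le\cos\alpha\cos\beta$.
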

\begin{proof} 
    Assume to the contrary that there exists $x\in \mathrm{int}\mathbb{B}^n(\beta)$ with $x\not\in A^{\gamma}\cup (TA)^{\gamma}$. Write $\varepsilon:=d(x, p)<\beta$. If $\gamma>\alpha+\varepsilon$, then for all $a\in A$, by the triangle inequality,
     \[d(x, a)\le d(x, p)+d(p, a)\le \varepsilon+\alpha<\gamma,\]    and similarly $d(x, Ta)<\gamma$. This reaches a contradiction.

    Now suppose the hypotheses of Lemma~\ref{shortest} are met.
    Since $x\not\in A^{\gamma}$, there exists $y\in A$ such that $d(x, y)>\gamma$. Similarly, as $x\not\in (TA)^{\gamma}$ and $T^{-1}=T$, we have $Tx\not\in A^{\gamma}$ and so there exists $z\in A$ with $d(Tx, z)>\gamma$. These, together with $y, z\in\mathbb{B}^n(\alpha)$, imply $y\in A_1$ and $z\in A_2$ as defined in Lemma~\ref{shortest}. Thus we have
    \begin{align*}
        2\sin^{-1}\left(\frac{\cos\alpha\cos\varepsilon-\cos\gamma}{\sin\varepsilon}\right)\le d(y, z)\le\delta.
    \end{align*}
    By manipulation, we obtain a quadratic inequality
    \[\left(\cos^2\alpha+\sin^2\frac{\delta}{2}\right)\cos^2\varepsilon-2\cos\alpha\cos\gamma\cos\varepsilon+\left(\cos^2\gamma-\sin^2\frac{\delta}{2}\right)\le 0\]
    and solving this gives $\varepsilon\ge\beta$. This is a contradiction.
\end{proof}

\subsection{Proof of the lower bound}

We are ready to provide a lower bound $\sigma_{\ell}^{\text{Sch}}$ by using Proposition~\ref{ballcontainedlem}. Let $K\in\mathcal{C}_n(w)$. There are two cases.
\begin{itemize}
    \item {\it Case 1: $w\ge\pi/2$.} Take $A=K^*$. Consider $\gamma=\pi/2$ (so $A^{\gamma}=A^*=K$), $\delta=\pi-w$, and 
    $\alpha_n=\sin^{-1}\left(\sqrt{\frac{2 n}{n+1}}\cos\frac{w}{2}\right)$. This comes from Jung's theorem (see \cite{BVD}), applied to the set
$A=K^*$ of diameter $\pi-w$. We assume the ball is centered at $p$.
    Thus
    \begin{align}\label{rerr}
        \operatorname{rer}_w(K)\ge\sigma_n\ge2^{-1/n}\left(\frac{\mu_n(\mathbb{B}^n(\beta_n))}{\mu_n(\mathbb{B}^n(w/2))}\right)^{1/n},
    \end{align}
    where $\beta_n:=\beta(\alpha_n, \gamma, \delta)$.
   Denote $\alpha_{\infty}=\lim_{n\to\infty}\alpha_n$ and $\beta_{\infty}=\beta(\alpha_{\infty}, \gamma, \delta)$. Then taking the limit infimum on both sides of \eqref{rerr} gives
    \begin{align*}
        \siglow(w)\ge \frac{\sin \beta_{\infty}}{\sin(w/2)}=\frac{2\sqrt{-\cos w}}{1-\cos w}=:\sigma_{\ell}^{\text{Sch}}(w).
    \end{align*}

    \item {\it Case 2: $w<\pi/2$.} Take $A=K$. Consider $\gamma=w$ (so $A^{\gamma}=A=K$), $\delta=w$, and $\alpha_n=\sin^{-1}\left(\sqrt{\frac{2n}{n+1}}\sin\frac{w}{2}\right)$ (also from Jung's theorem). Then we may proceed the same argument.
\end{itemize}
\noindent
For completeness, we also need to justify that the hypotheses for Proposition~\ref{ballcontainedlem} hold. Indeed, since cosine is decreasing on $[0, \pi]$, it suffices to show that 
\begin{align*}
     \cos\gamma\le\cos\alpha_n\cos\beta_n \quad&\Leftrightarrow\quad\cos\gamma\sin\frac{\delta}{2}\le \cos\alpha_n\sqrt{\cos^2\alpha_n-\cos^2\gamma+\sin^2\frac{\delta}{2}}\\
     &\Leftrightarrow\quad 0\le (\cos^2\alpha_n-\cos^2\gamma)\left(\cos^2\alpha_n+\sin^2\frac{\delta}{2}\right)
     \\
     &\Leftrightarrow\quad \cos\gamma\le \cos\alpha_n.
\end{align*}
This can be easily verified to hold for $w\in(0, \pi)$ with our choice of variables.

\begin{remark}
   Note that $\sigma_{\ell}^{\text{Sch}}(w)$ is decreasing on $(0, \pi/2)$ and increasing on $(\pi/2, \pi)$. Additionally, observe the boundary cases:
   \begin{align*}
       \lim_{w\to 0^+}\sigma_{\ell}^{\text{Sch}}(w)=\sqrt{3}-1, \quad \lim_{w\to \pi/2}\sigma_{\ell}^{\text{Sch}}(w)=0,\quad\text{and}\quad\lim_{w\to \pi^-}\sigma_{\ell}^{\text{Sch}}(w)=1.
   \end{align*}
   When $w\to 0^+$, the result agrees with the Euclidean case established by Schramm~\cite{SchrammVol}.
\end{remark}

\begin{remark}
With the current method, $\siglow(2\pi/3)\ge\sqrt{8/9}$.
    An exponential improvement of the lower bound in this width case would lead to an exponential improvement of the upper bound on Borsuk's numbers using the method of~\cite{schr}, see also~\cite[Section~3.2]{Kalai-survey}.
\end{remark}


\section{Gaussian lower bounds near the central width}\label{sec:lower_new}

\subsection{The central-width case}

Let us first provide some setups. For a measurable set $A\subset \mathbb{S}^n$, define a normalized measure  \[\nu_n(A):=\frac{\mu_n(A)}{\mu_n(\mathbb{S}^n)}.\]
A  (convex) cone is $C$ a subset of $\mathbb{R}^n$ which is closed under linear combinations with non--negative coefficients. Define its dual by
\[C^*:=\{y\in\mathbb{R}^{n} : \langle x, y\rangle\ge 0 \text{ for every }x\in C\}.\]
    Furthermore, define $\mathcal{CO}_n$ as the collection of convex cone $C\subset\mathbb{R}^n$ such that $C^*\subset C$.

\begin{proposition}\label{proponpi/2}
For any $n\ge 2$, define $T_n:=\inf_{K\in\mathcal{C}_n(\pi/2)}\nu_n(K)$. Then we have
\begin{align}\label{expo}
    \liminf_{n\to\infty}T_n^{1/n}\ge\xi_*,
\end{align}
where $\xi_*= 0.43174\ldots$ is the unique root of the quartic equation
    \[\xi^4+\xi^2-2\sqrt{2}\xi+1=0\]
    in the interval $\left(\sqrt{2}-1,\frac{1}{2}\right)$.
Consequently, we have $\siglow(\pi/2)\ge\sqrt{2}\xi_*= 0.61057\ldots$.
\end{proposition}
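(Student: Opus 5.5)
The plan is to pass from the spherical body to its cone and work with the Gaussian measure, where self-duality gives an exact orthogonal decomposition. The final consequence is just a rescaling: $\mu_n(\mathbb B^n(\pi/4))^{1/n}/\mu_n(\mathbb S^n)^{1/n}\to \sin(\pi/4)=1/\sqrt2$ by \eqref{intsin}, so $\operatorname{rer}_{\pi/2}(K)\ge (1+o(1))\sqrt2\,\nu_n(K)^{1/n}$, and \eqref{expo} gives $\siglow(\pi/2)\ge\sqrt2\xi_*$. The main work is therefore \eqref{expo}.

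\textbf{Step 1: reduce to a self-dual cone.} Let $K\in\mathcal C_n(\pi/2)$. By the complete-body characterization used in \Cref{constantwidth}, $K=\bigcap_{y\in K}\mathbb B^n(y,\pi/2)=K^*$. Hence $C:=\{tx:t\ge0,\ x\in K\}\subset\mathbb R^{n+1}$ satisfies $C^*=C$, and in particular $C\in\mathcal{CO}_{n+1}$. Let $\gamma$ be the standard Gaussian measure on $\mathbb R^{n+1}$. Rotation invariance gives $\gamma(C)=\nu_n(K)$, so it suffices to show $\liminf\gamma(C)^{1/n}\ge\xi_*$ uniformly over self-dual cones.

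\textbf{Step 2: Moreau decomposition.} Since the polar is $C^\circ=-C^*=-C$, every $x\in\mathbb R^{n+1}$ can be written uniquely as $x=a-b$ with $a=P_C x\in C$, $b=P_C(-x)\in C$ and $\langle a,b\rangle=0$. Thus $\|a\|^2+\|b\|^2=\|x\|^2$, $\operatorname{dist}(x,C)=\|b\|$ and $\operatorname{dist}(x,-C)=\|a\|$. Consequently every $x$ lies within distance $\|x\|/\sqrt2$ of $C\cup(-C)$. With a parameter $\xi$ to be optimized, I would split $\mathbb R^{n+1}$ according to whether $\|b\|\le\xi\|x\|$ (or $\|a\|\le\xi\|x\|$) or both components are comparable. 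The "close to $C$" part is a tube of $C$ of relative radius $\xi$, and its measure will be compared with $\gamma(C)$ in Step 3.

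\textbf{Step 3: autocorrelation bound.} For $x=a-b$, the shift $x+b=a$ lies in $C$, so $x\in C-b$ with $b\in C$ and $\|b\|\le\|x\|$. I would bound the Gaussian measure of the union of such shifts by an autocorrelation integral
\[
\int\gamma\bigl(C\cap(C+v)\bigr)\,d\lambda(v)
\]
against a Gaussian weight $\lambda$, using Cauchy--Schwarz. Rotation invariance of $\gamma$ and the exponential tail $\gamma(\cdot+v)\le e^{-\|v\|^2/2}\times$(correction) then give, after concentration of $\|x\|$ at $\sqrt{n}$, a bound of the form
\[
1\lesssim \bigl(\gamma(C)\cdot F(\xi)\bigr)^{cn}+\bigl(\text{rate}(\xi)\bigr)^n .
\]
Here $F$ and the rate are explicit functions of $\xi$, obtained from Gaussian shift estimates on the two-dimensional $(\|a\|,\|b\|)$ picture and from cap asymptotics as in \eqref{intsin}. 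Optimizing over $\xi$ and balancing the two exponential terms should produce the equation $\xi^4+\xi^2-2\sqrt2\,\xi+1=0$. The $2\sqrt2$ should come from $D=\sqrt{1+R^2}$ with $R=1$ at $w=\pi/2$. It then remains to check that the quartic has a unique root in $(\sqrt2-1,\tfrac12)$, by sign changes at the endpoints and monotonicity there.

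\textbf{Main obstacle.} The main obstacle is Step 3: converting "$x$ is near $C$, with a witness shift $b\in C$" into a sharp exponential upper bound in terms of $\gamma(C)$ alone, valid for arbitrary self-dual cones. I would first try to estimate the expectation of $\gamma(C\cap(C+tY))$, with $Y$ an independent Gaussian vector, from both sides. The lower estimate would come from the Moreau covering of Step 2. The upper estimate would use $\gamma(C\cap(C+v))\le\min\{\gamma(C),\gamma(C+v)\}$ together with Gaussian shift inequalities. I would then tune $t$ and $\xi$ jointly, expecting the optimum to give exactly the stated quartic.
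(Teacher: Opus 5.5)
\textbf{There is a genuine gap: Step 3, which has to carry all the content, is missing.}

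Your reduction to a self-dual cone in $\mathbb{R}^{n+1}$, the use of the Moreau decomposition, and the final rescaling to $\siglow(\pi/2)$ are the right ingredients and agree with the paper. Step 3, however, is not an argument. $F(\xi)$, $\mathrm{rate}(\xi)$ and $c$ are never specified, and the claim that balancing them ``should'' yield $\xi^4+\xi^2-2\sqrt2\,\xi+1=0$ is unsupported.

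The mechanism you sketch also does not close as stated:
\begin{itemize}
\item Covering $\mathbb{R}^{n+1}$ by conic tubes around $\pm C$ bounds $\gamma(C)$ from below only if the Gaussian measure of such a tube is bounded by a function of $\gamma(C)$. You have no such estimate, and a neighbourhood can be far larger than the set.
\item For $v=a-b$, the containment $C+a\subset C\cap(C+v)$ gives a lower estimate $\gamma(C+a)$. This is at most $\gamma(C)$, and after Cauchy--Schwarz it is $\gamma(C)$ times a radial factor.
\item Your upper estimate $\mathbb{E}\min\{\gamma(C),\gamma(C+tY)\}$ is trivially $\le\gamma(C)$. It can only be improved by controlling the measure of the set of $Y$ for which $C+tY$ is not small. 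That set is essentially a neighbourhood of $-C$, so this is the same missing tube estimate.
\end{itemize}
Two bounds that are both linear in $\gamma(C)$ cancel and give nothing.

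\textbf{First missing idea: an exactly quadratic identity.} Integrate the autocorrelation against Lebesgue measure. Let $N=n+1$, let $\phi$ be the standard Gaussian density on $\mathbb{R}^N$, and set $F(z)=\int_{C\cap(z+C)}\phi(x)\phi(x-z)\,dx$. Fubini gives $\int_{\mathbb{R}^N}F(z)\,dz=\gamma(C)^2$ exactly. Now write $z=u-v$ by Moreau, with $u,v\in C$ and $u\perp v$, so $\|u+v\|=\|z\|=s$. Using $u+C\subset C\cap(z+C)$ gives
\[
F(z)\ge(2\pi)^{-N}e^{-s^2/2}\int_C e^{-\|q\|^2-\langle u+v,q\rangle}\,dq .
\]
After $\langle u+v,q\rangle\le s\|q\|$ and polar coordinates on $C$, this is at least $\gamma(C)$ times an explicit radial integral. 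Dividing by $\gamma(C)$ gives the crude bound $\gamma(C)\ge(\sqrt2-1)^N$.

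\textbf{Second missing idea: a bootstrap.} The quartic comes from a self-improving inequality, not from optimizing a tube radius. Restrict the $q$-integral to directions $\theta$ with $\langle (u+v)/s,\theta\rangle\le\vartheta$. This costs at most a cap of normalized measure $q_N(\vartheta)$, with $q_N(\vartheta)^{1/N}\to\sqrt{1-\vartheta^2}$. It yields $\gamma(C)^2\ge(\gamma(C)-q_N(\vartheta))L_N(\vartheta)$, with $L_N(\vartheta)^{1/N}\to(\sqrt2+\vartheta)^{-1}$.

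Let $\xi$ be the $\liminf$ of the $N$-th root of the infimum of $\gamma(C)$ over the class; the crude bound gives $\xi>0$. Then every $\vartheta$ with $\sqrt{1-\vartheta^2}<\xi$ makes the cap negligible, so $\xi\ge(\sqrt2+\vartheta)^{-1}$. Letting $\vartheta\downarrow\sqrt{1-\xi^2}$ yields $\xi\ge1/(\sqrt2+\sqrt{1-\xi^2})$. Since $\xi\le 1/2$ (the orthant), this is equivalent to $\xi^4+\xi^2-2\sqrt2\,\xi+1\le0$, i.e.\ $\xi\ge\xi_*$.

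Neither the exact quadratic identity nor this self-improving step appears in your proposal, so the main inequality remains unproved.
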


\begin{remark}
    For the case of self-polar cones, the asymptotic estimate of Proposition~\ref{proponpi/2} was announced without a proof by Nazarov in \cite{NazarovMO}.
\end{remark}

The proof of Proposition \ref{proponpi/2} will consist of several ingredients, which we shall prove first.
Let $C_K:=\{rx : r\ge 0, x\in K\}\subset\mathbb{R}^{n+1}$ be the cone generated by $K\subset\mathbb{S}^n$. 
Note that a spherical convex body of width $\pi/2$ is self-polar. Hence $C_K^*=C_K=C_{K^*}$.

Next, let $\gamma_n$ denote the standard Gaussian measure on $\mathbb{R}^n$. For more information related to this, we refer the readers to \cite{Stroock}. By Fubini--Tonelli's theorem, we have
\begin{align*}
    \gamma_{n+1}(C_K)&=(2\pi)^{-\frac{n+1}{2}}\int_{C_K}e^{-\frac{\|x\|^2}{2}}\,dx=(2\pi)^{-\frac{n+1}{2}}\int_{K}\int_0^{\infty}e^{-\frac{r^2}{2}}r^n\,dr\,d\mu_n(\theta)\\
    &=\left(\int_K\,d\mu_n(\theta)\right)\left((2\pi)^{-\frac{n+1}{2}}\int_0^{\infty}e^{-\frac{r^2}{2}}r^n\,dr\right)=\frac{\mu_n(K)}{\mu_n(\mathbb{S}^n)}=\nu_n(K).
\end{align*}
It therefore remains to lower-bound the Gaussian measure of a self-dual cone. We first prove an explicit exponential lower bound valid in all dimensions.

\begin{lemma}\label{root2-1}
    For all $n\ge 2$ and $C\in\mathcal{CO}_n$, we have $\gamma_n(C)\ge(1+\sqrt{2})^{-n}$.
\end{lemma}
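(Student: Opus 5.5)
The plan is to pass from the cone to the unit ball and then use the Moreau decomposition to cover the ball by controlled pieces of $C$. Since $\gamma_n$ is rotation invariant and $C$ is a cone, integrating in polar coordinates as in the computation preceding the statement gives
\[
\gamma_n(C)=\frac{\vol_n(C\cap \mathbb B^n_E)}{\vol_n(\mathbb B^n_E)} .
\]
So it suffices to prove $\vol_n(K)\ge(1+\sqrt2)^{-n}\vol_n(\mathbb B^n_E)$ for $K:=C\cap\mathbb B^n_E$.

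The structural input is Moreau's decomposition with respect to $C$ and its polar $C^\circ=-C^*$. Every $x\in\mathbb R^n$ can be written uniquely as $x=a-b$ with $a=P_C x\in C$, $b=P_{C^*}(-x)\in C^*$ and $\langle a,b\rangle=0$, so that $\|a\|^2+\|b\|^2=\|x\|^2$. The hypothesis $C^*\subset C$ gives $b\in C$ as well. Hence for $x\in\mathbb B^n_E$ both $a$ and $b$ lie in $K$, and, more usefully, $a+tb\in C$ for every $t\ge 0$. I would exploit this through the vector $y:=a+\lambda b$ for a parameter $\lambda\ge 0$ to be chosen. Since $a\perp b$,
\[
\|y\|^2=\|a\|^2+\lambda^2\|b\|^2 .
\]
Then $x$ is recovered as $x=y-(1+\lambda)b$ with $b\in C$, which expresses $\mathbb B^n_E$ as contained in a set of the form $K-cK'$, where $K'\subset K$ is the part of $C$ selected by the orthogonality constraint.

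The key step is to turn this containment into a volume bound with base exactly $1+\sqrt2$. My first attempt is a Brunn--Minkowski/Rogers--Shephard type estimate. The crude bound $\vol(K-K)\le\binom{2n}{n}\vol(K)$ only yields base $4$. The orthogonality $a\perp b$ and the norm budget $\|a\|^2+\|b\|^2\le1$ must therefore be used to sharpen it. Concretely, I would parametrize $x$ by $(a,b)$, use the homogeneity of $C$ to rescale the $b$-part by a factor $s$ with $\|a\|+s\|b\|$ controlled, and optimize. Maximizing $\|a\|+\|b\|\cdot(\text{scale})$ subject to $\|a\|^2+\|b\|^2\le 1$ should produce the constant $1+\sqrt2$, via $\max(u+\sqrt2\,v)$-type expressions: the Cauchy--Schwarz extremum $\sqrt{1+2}$ combined with the offset $1$. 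This would give a covering of $\mathbb B^n_E$ by a $(1+\sqrt2)$-dilate of a translate of $K$, or by a measure-nonincreasing image of such a dilate.

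An alternative route to the same bound uses the map $G(x)=x+\lambda P_{C^*}(-x)$, which sends $\mathbb R^n$ into $C$. Its derivative is $I-\lambda DP$ with $DP$ symmetric and having eigenvalues in $[0,1]$, and $\|G(x)\|\le\sqrt{1+\lambda^2}\,\|x\|$. The hope is to choose $\lambda$ and a companion rescaling so that the area formula gives $\gamma_n(C)\ge(1+\sqrt2)^{-n}$.

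I expect the main obstacle to be precisely this last step: getting an injective (or bounded-multiplicity) correspondence whose Jacobian is bounded below by $(1+\sqrt2)^{-n}$ uniformly. The naive reflection $\lambda=2$ preserves norms, but its Jacobian can vanish on the faces of $C$. If the direct Jacobian estimate fails, I would fall back on the covering/dilation formulation. There I would verify, by a two-variable computation in the plane spanned by $a$ and $b$, that every $x\in\mathbb B^n_E$ lies in $(1+\sqrt2)K-z(x)$ with $z(x)$ ranging over a set of measure zero contribution. The resulting bound holds in every dimension $n\ge2$, as claimed in the statement.
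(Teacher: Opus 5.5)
Your preliminary reductions are correct: $\gamma_n(C)=\vol_n(C\cap\mathbb B^n_E)/\vol_n(\mathbb B^n_E)$, and Moreau gives $x=a-b$ with $a\in C$, $b\in C^*\subset C$, $a\perp b$. But the step that is supposed to produce the base $1+\sqrt2$ is never carried out, and none of the concrete forms you suggest works.

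\begin{itemize}
\item A single-translate covering $\mathbb B^n_E\subset(1+\sqrt2)K-z$ is already false for the self-dual orthant $C=\mathbb R^n_+$. If $\mathbb B^n_E+z\subset\lambda K$, testing the points $-e_i$ forces $z_i\ge 1$ for every $i$. Hence $\|z\|\ge\sqrt n$ and $\lambda\ge 1+\sqrt n$, which exceeds $1+\sqrt2$ once $n\ge3$.
\item Letting $z=z(x)$ vary destroys the volume bound. The set $\lambda K-Z$ can be far larger than $\lambda^n\vol_n(K)$ even for a null set $Z$ such as a segment, so ``measure zero contribution'' has no meaning here.
\item The ``measure-nonincreasing image'' option is just a restatement of the inequality you want to prove.
\item The numerology is off: $\max\{u+\sqrt2\,v: u^2+v^2\le 1\}=\sqrt3$, so ``offset plus extremum'' gives $1+\sqrt3$, not $1+\sqrt2$.
\item For $G(x)=a+(\lambda-1)b$ you name the obstruction correctly, but nothing in the proposal controls the Jacobian, which can vanish. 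Nor does it control the multiplicity, which is already $2^n$ for the orthant with $\lambda=2$.
\end{itemize}

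The missing idea is to \emph{average over shifts} rather than cover pointwise, and to keep the Gaussian weight throughout. Your instinct to sharpen Rogers--Shephard points the right way, since that proof is itself an autocorrelation identity. Put $h=\mathbf 1_C\phi_n$, $\kappa=\gamma_n(C)$ and $F(z)=\int h(x)h(x-z)\,dx$, so that $\int F=\kappa^2$. For $z=u-v$ (Moreau decomposition) you only need the single translate $u+C\subset C\cap(z+C)$. The identity $\|u+w\|^2+\|v+w\|^2=\|z\|^2+2\|w\|^2+2\langle u+v,w\rangle$, together with $\|u+v\|=\|z\|=:s$ and Cauchy--Schwarz, gives $F(z)\ge\kappa(2\pi)^{-n}\omega_n e^{-s^2/2}\int_0^\infty r^{n-1}e^{-r^2-sr}\,dr$. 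Integrating in $z$ yields $\kappa\ge L_n(1)$, whose exponential rate is $1/\min_{t>0}(t+1+\frac1{2t})=1/(1+\sqrt2)$. A Taylor bound on $\log(1+\sqrt2\cosh x)$ and Kershaw's inequality then make the bound hold exactly for every $n\ge2$.

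The Gaussian is what makes the orthogonality pay. Run the same argument with the uniform measure on $K=C\cap\mathbb B^n_E$, and the Moreau translate only gives $\vol_n(K\cap(K+z))\ge(1-\max\{\|u\|,\|v\|\})^n\vol_n(K)$. Using $\max\{\|u\|,\|v\|\}\le\|z\|$, integration returns exactly the Rogers--Shephard constant $\binom{2n}{n}^{-1}$. The Gaussian product, by contrast, depends only on $\|u\|^2+\|v\|^2=\|z\|^2$, so unbalanced decompositions cost nothing. Your opening reduction to uniform volume discards precisely this advantage.
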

\begin{proof}
    Fix one such cone $C$, and write $\kappa:=\gamma_n(C)$. Let
    \[\phi_n(x):=(2\pi)^{-\frac{n}{2}}e^{-\frac{\|x\|^2}{2}}\]
    be the Gaussian density function. Put $h(x):=\mathbf{1}_C(x)\phi_n(x)$, where $\mathbf{1}_C$ stands for the indicator function of the set $C$. Clearly we have
    $\int_{\mathbb{R}^n}h(x)dx=\kappa$.
    Define the autocorrelation
    \[F(z):=\int_{\mathbb{R}^n}h(x)h(x-z)\,dx=\int_{C\cap (z+C)}\phi_n(x)\phi_n(x-z)\,dx,\]
    where $z+C:=\{z+c : c\in C\}$.
    Observe that, by Fubini--Tonelli's theorem,
    \begin{align}\label{identityp}
    \begin{split}
        \int_{\mathbb{R}^n}F(z)\, dz &=\int_{\mathbb{R}^n}\int_{\mathbb{R}^n}h(x)h(x-z)\,dx\,dz=\int_{\mathbb{R}^n}h(x)\left(\int_{\mathbb{R}^n}h(x-z)\,dz\right)\,dx\\
        &=\left(\int_{\mathbb{R}^n}h(x)\,dx\right)^2=\kappa^2.
    \end{split}
    \end{align}
    Next we invoke the Moreau decomposition \cite{Moreau}:  for every $z\in\mathbb{R}^n$, there are $u\in C$ and $v\in C^*\subset C$ so that $z=u-v$ and $\langle u, v\rangle=0$. Note that, because the cone is closed under addition, we have $u+C\subset C\cap (z+C)$. Put $s=\|z\|$ and $y=u+v$. By orthogonality, 
    $\|y\|=\|u+v\|=\|u-v\|=\|z\|=s$.
    So for $w\in C$, both $u+w$ and $v+w=(u+w)-z$ belong to $C$. Hence, by some expansion, we obtain
    \begin{align}\label{keyineq}
        \begin{split}
            F(z) &\ge\int_{u+C}\phi_n(x)\phi_n(x-z)\, dx
            =\int_C\phi_n(u+w)\phi_n(v+w)\, dw
            \\&\ge
            (2\pi)^{-n}\int_C\exp\left(-\frac{\|u+w\|^2+\|v+w\|^2}{2}\right)\, dw\\
            &=(2\pi)^{-n}e^{-s^2/2}\int_Ce^{-\|w\|^2-\langle y, w\rangle}\, dw.
        \end{split}
    \end{align}
    Next, by Cauchy--Schwarz, $\langle y, w\rangle\le s\|w\|$. As the remaining integrand in \eqref{keyineq} is radial, we may perform integration in polar coordinates, by writing $w=r\theta$ where $r=\|w\|$:
    \begin{align}\label{newineq}
    \begin{split}
        F(z) &\ge (2\pi)^{-n}\mu_{n-1}(C\cap \mathbb{S}^{n-1})e^{-\frac{s^2}{2}}\int_0^{\infty}r^{n-1}e^{-r^2-sr}\,dr
        \\&= \kappa(2\pi)^{-n}\mu_{n-1}(\mathbb{S}^{n-1})e^{-\frac{s^2}{2}}\int_0^{\infty}r^{n-1}e^{-r^2-sr}\,dr.
    \end{split}
    \end{align}
    Integrating \eqref{newineq} over $z\in\mathbb{R}^n$ (recall: $s=\|z\|$), using \eqref{identityp}, and cancelling $\kappa$, we yield
    \begin{align}
        \kappa\ge L_n(1),
    \end{align}
    where, for $0<\vartheta\le 1$, define
    {\begin{align}\label{Lnalpha}
    \begin{split}
        L_n(\vartheta) &:=(2\pi)^{-n}\omega_{n}^2\int_0^{\infty}\int_0^{\infty}s^{n-1}r^{n-1}e^{-\frac{s^2}{2}-r^2-\vartheta sr}\,dr\,ds
        \\
        &=\frac{\Gamma(\frac{n+1}{2})}{\sqrt{\pi}\Gamma(\frac{n}{2})}\int_0^{\infty}\left(\frac{t}{t^2+\vartheta t+1/2}\right)^n\,\frac{dt}{t}\\
        &=\frac{\Gamma(\frac{n+1}{2})}{\sqrt{\pi}\Gamma(\frac{n}{2})}\int_{\mathbb{R}}(\vartheta+\sqrt{2}\cosh x)^{-n}\,dx.
    \end{split}
    \end{align}}
    Note that the second equality follows by putting $t=r/s$ and integrating first in $s$, while third follows from substituting $t=e^x/\sqrt{2}$.

    Now, we find an exponential lower bound for $L_n(1)$. 
    Put $\mathfrak{b}:=\sqrt{2}-1$, $\mathfrak{c}:=2-\sqrt{2}$, and
    \[f(x):=\log\frac{1+\sqrt{2}\cosh x}{1+\sqrt{2}}\]
    for $x\in\mathbb{R}$. Observe that $f(0)=f'(0)=0$ and 
    \begin{align*}
        f''(x)=\frac{2+\sqrt{2}\cosh x}{(1+\sqrt{2}\cosh x)^2}\le \mathfrak{c}.
    \end{align*}
    Thus using Taylor's theorem with integral remainder, we obtain
    \[f(x)=f(0)+xf'(0)+\int_0^x(x-t)f''(t)\, dt\le\frac{\mathfrak{c}x^2}{2}\]
    for $x\ge 0$. As $f$ is even, the same bounds also holds for $x\le 0$.
    It then follows that
    \begin{align}\label{bound1}
        \int_{\mathbb{R}}(1+\sqrt{2}\cosh x)^{-n}\,dx=\int_{\mathbb{R}}\mathfrak{b}^ne^{-nf(x)}\,dx\ge \mathfrak{b}^n\sqrt{\frac{2\pi}{\mathfrak{c}n}}
    \end{align}
    via standard Gaussian integral formula. Also recall Kershaw's inequality \cite{Kershaw}:
    \begin{align}\label{bound2}
        \frac{\Gamma(\frac{n+1}{2})}{\Gamma(\frac{n}{2})}\ge\sqrt{\frac{n}{2}-\frac{1}{4}}
    \end{align}
    for $n\ge 2$. Combining \eqref{bound1}, \eqref{bound2} gives $L_n(1)\ge \mathfrak{b}^n$ as desired.
\end{proof}

Lemma~\ref{root2-1} provides an initial exponential bound. In fact, this type of argument can provide a better estimate. Recall that the main task is to bound $F$ from below by integrating
$
\varrho_z(w):=e^{-\|w\|^2-\langle y,w\rangle}$
over the cone $C$. Rather than integrating over all of $C$, we restrict to a suitable subset on which $\varrho_z(w)$ admits a sharper lower bound. Specifically, instead of plain Cauchy--Schwarz bound of $\langle y,w\rangle$, an improved estimate uses information about the angle between $y$ and $w$. Asymptotically, the resulting gain outweighs the loss from restricting the domain.

\begin{lemma}\label{prelim}
    For $n\ge 2$, define
    $M_n:=\inf\{\gamma_n(C) : C\in\mathcal{CO}_n\}$.
    Then \[\liminf_{n\to\infty}M_n^{1/n}\ge \xi_*.\]
\end{lemma}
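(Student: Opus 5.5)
The plan is to rerun the autocorrelation argument of Lemma~\ref{root2-1}, but to integrate only over the part of $C$ where $\langle y,w\rangle$ is noticeably smaller than $s\|w\|$. This produces a self-consistent inequality for $\kappa^{1/n}$ whose fixed point is exactly $\xi_*$.

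\textbf{Setup.} Let $\xi:=\liminf_{n\to\infty}M_n^{1/n}$. Lemma~\ref{root2-1} gives $\xi\ge\sqrt2-1>0$. Pass to a subsequence of dimensions and cones $C_n\in\mathcal{CO}_n$ with $\kappa_n:=\gamma_n(C_n)$ and $\kappa_n^{1/n}\to\xi$. Fix $t\in(0,1)$ with $\sqrt{1-t^2}<\xi$.

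\textbf{Step 1: remove a cap.} For each $z$, take the Moreau decomposition $z=u-v$, $y=u+v$, $s=\|z\|$ exactly as before. Split $C$ into
\[
C'_y:=\{w\in C:\langle y,w\rangle\le ts\|w\|\}
\]
and its complement in $C$. The complement lies in the cone over a spherical cap of angular radius $\cos^{-1}t$ around $y/s$. Since Gaussian measure of a cone equals the normalized spherical measure of its trace on $\mathbb{S}^{n-1}$, that complement has Gaussian measure at most $c\,n^{c'}(1-t^2)^{(n-1)/2}$ by the standard cap estimate. Because $\sqrt{1-t^2}<\xi$, this is $o(\kappa_n)$. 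Hence, for large $n$ in the subsequence and uniformly in $z$,
\[
\mu_{n-1}(C'_y\cap\mathbb{S}^{n-1})\ge \tfrac12\,\mu_{n-1}(C\cap\mathbb{S}^{n-1}).
\]

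\textbf{Step 2: redo the integration.} On $C'_y$ we have $\langle y,w\rangle\le t s\|w\|$, so the chain \eqref{keyineq}--\eqref{newineq} goes through with $e^{-r^2-sr}$ replaced by $e^{-r^2-tsr}$, at the cost of the factor $\tfrac12$. Integrating over $z$ and using \eqref{identityp} then gives $\kappa_n\ge\tfrac12 L_n(t)$, with $L_n$ as in \eqref{Lnalpha}. Two asymptotic inputs control the right-hand side:
\begin{itemize}
\item By Laplace's method, the integrand $(t+\sqrt2\cosh x)^{-n}$ is maximized at $x=0$, so $\bigl(\int_{\mathbb R}(t+\sqrt2\cosh x)^{-n}dx\bigr)^{1/n}\to (t+\sqrt2)^{-1}$. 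The same Taylor bound as in \eqref{bound1} would make this rigorous.
\item The Gamma-function prefactor is polynomial in $n$, so it does not affect the $n$th root.
\end{itemize}
Therefore $\xi\ge 1/(t+\sqrt2)$ for every $t>\sqrt{1-\xi^2}$. Letting $t\downarrow\sqrt{1-\xi^2}$ gives
\[
\xi\bigl(\sqrt{1-\xi^2}+\sqrt2\bigr)\ge1.
\]

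\textbf{Step 3: solve the inequality.} Set $G(\xi):=\xi\sqrt{1-\xi^2}+\sqrt2\,\xi-1$. Equality $G(\xi)=0$ is equivalent, after squaring $\xi\sqrt{1-\xi^2}=1-\sqrt2\xi$ (valid when $\xi\le1/\sqrt2$), to $\xi^2-\xi^4=1-2\sqrt2\xi+2\xi^2$, i.e.\ to the quartic $\xi^4+\xi^2-2\sqrt2\xi+1=0$. Moreover $G(\sqrt2-1)<0$, and $G$ is increasing on the relevant interval. Hence $G(\xi)\ge0$ forces $\xi\ge\xi_*$, with $\xi_*$ the unique root of the quartic in $(\sqrt2-1,\tfrac12)$. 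If the argument is phrased by contradiction (assume $\xi<\xi_*$), the same computation rules it out.

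\textbf{Main obstacle.} The delicate step is Step 1. One needs the cap bound to be uniform in $z$, and it must be genuinely negligible relative to $\kappa_n$. This is why $t$ has to be tied to the unknown growth rate $\xi$ of $\kappa_n$ rather than fixed in advance, and why the argument works along a subsequence realizing the $\liminf$. Checking the monotonicity of $G$ and the root location is routine.
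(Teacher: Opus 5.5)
Your proposal is correct and follows essentially the same route as the paper. Both arguments restrict the autocorrelation integral to directions with $\langle \eta,\theta\rangle\le\vartheta$, both note that the discarded cap is exponentially negligible relative to $\kappa$ once $\sqrt{1-\vartheta^2}<\xi$, and both arrive at $\xi\ge(\sqrt2+\sqrt{1-\xi^2})^{-1}$. The differences are cosmetic: you absorb the cap loss into a factor $\tfrac12$ along a near-extremal subsequence, where the paper carries $(\kappa-q_n(\vartheta))$ and divides by $M_n$, and you solve the final inequality via the function $G$ rather than by a sign test on the quartic.
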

\begin{proof}
    For fixed $0<\vartheta<1$, let $\mathcal{U}_n(\eta, \vartheta):=\{\theta\in\mathbb{S}^{n-1} : \langle \eta, \theta\rangle>\vartheta\}$ and
    \begin{align*}
        q_n(\vartheta):=\frac{\mu_{n-1}(\{\theta\in\mathbb{S}^{n-1} : \langle \eta, \theta\rangle>\vartheta\})}{\mu_{n-1}(\mathbb{S}^{n-1})}
    \end{align*}
    where $\eta\in\mathbb{S}^{n-1}$ is arbitrary. For $z\ne 0$, take $\eta=y/s$. Define
    \[\mathcal{D}_n(\eta, \vartheta):=\{\theta\in C\cap\mathbb{S}^{n-1} : \langle \eta, \theta\rangle\le\vartheta\}.\]
    Notice that $(C\cap\mathbb{S}^{n-1})\setminus\mathcal{U}_n(\eta, \vartheta)=\mathcal{D}_n(\eta, \vartheta)$.  By the property of measure, 
    \begin{align}\label{comparison}
        \nu_{n-1}(\mathcal{D}_n(\eta, \vartheta))\ge\nu_{n-1}(C\cap \mathbb{S}^{n-1})-\nu_{n-1}(\mathcal{U}_n(\eta, \vartheta))=\kappa-q_n(\vartheta).
    \end{align}
    Next take a good direction $\theta$ so that $\langle \eta, \theta\rangle\le\vartheta$. Write $w=r\theta$ and $r=\|w\|$ as usual. Since $y=s\eta$, we have
    \[\langle y, w\rangle=\langle s\eta, r\theta\rangle=sr\langle \eta, \theta\rangle\le \vartheta sr.\]
    So, by instead integrating along these good directions, we have
    \begin{align}\label{better}
    \begin{split}
        F(z) &\ge (2\pi)^{-n}e^{-\frac{s^2}{2}}\int_{C_{\mathcal{D}_n(\eta, \vartheta)}}e^{-\|w\|^2-\langle y, w\rangle}\,dw
        \\&\ge(2\pi)^{-n}e^{-\frac{s^2}{2}}\mu_{n-1}(\mathcal{D}_n(\eta, \vartheta))\int_0^{\infty}r^{n-1}e^{-r^2-\vartheta s r}\,dr.
    \end{split}
    \end{align}
    Now, integrating \eqref{better} over $z\in\mathbb{R}^n$ and using \eqref{identityp}, \eqref{comparison}, we find
    \begin{align}\label{kappasquareineq}
        \kappa^2\ge (\kappa-q_n(\vartheta))L_n(\vartheta).
    \end{align}
    Let us keep this in mind. Next we see that
    \begin{align}\label{anotherclaim}
        (q_n(\vartheta))^{1/n}\sim\left(\frac{\mu_{n-1}(\mathbb{B}^{n-1}(\cos^{-1}\vartheta))}{\mu_{n-1}(\mathbb{S}^{n-1})}\right)^{1/n}\sim \sin(\cos^{-1}\vartheta)=\sqrt{1-\vartheta^2}.
    \end{align}
    Moreover, by the $L^p$-to-$L^{\infty}$-limit,
    \begin{align}\label{claim}
        (L_n(\vartheta))^{1/n}\sim \sup_{x\in\mathbb{R}}\frac{1}{\vartheta+\sqrt{2}\cosh x}
      =\frac1{\sqrt2+\vartheta}.
    \end{align}
    
    In the final step, set $\xi:=\liminf_{n\to\infty}M_n^{1/n}$. Lemma \ref{root2-1} gives $\xi\ge \sqrt{2}-1$. Also, since $\mathcal{O}_{n-1}$ is self-dual and has Gaussian measure $2^{-n}$, $\xi\le 1/2$. Choose
    $\vartheta\in(0, 1)$ so that $\sqrt{1-\vartheta^2}<\xi$. By \eqref{anotherclaim}, \eqref{claim}, the definition of $\xi$, and the condition of $\vartheta$, there exists a constant $\lambda\in(0, 1)$ so that
    \begin{align}\label{expodecay}
        \frac{q_n(\vartheta)}{M_n}=O(\lambda^n).
    \end{align}
    Dividing \eqref{kappasquareineq} by $\kappa$, using $\kappa\ge M_n$, and taking the infimum over all $C\in\mathcal{CO}_n$, we have
    \[M_n\ge\left(1-\frac{q_n(\vartheta)}{M_n}\right)L_n(\vartheta).\]
    Then taking the $n$-rooth, letting $n\to\infty$, and using \eqref{expodecay}, we have $\xi\ge(\vartheta+\sqrt{2})^{-1}$. Letting $\vartheta\downarrow\sqrt{1-\xi^2}$, we obtain the inequality
    \begin{align*}
        \xi\ge\frac{1}{\sqrt{2}+\sqrt{1-\xi^2}}\quad\Rightarrow\quad  \xi^4+\xi^2-2\sqrt{2}\xi+1\le 0.
    \end{align*}
    By the sign test, we find that $\xi\ge \xi_*$ as described in the assertion.
\end{proof}

\begin{remark}
    While Lemma~\ref{prelim} gives a better lower bound than Lemma~\ref{root2-1}, we use Lemma~\ref{root2-1} in the proof when  
    $\vartheta\in(0, 1)$ is selected, which is possible due to $\xi>0$. Besides that, Lemma~\ref{root2-1} provides an explicit estimate for all dimensions, while the result of Lemma~\ref{prelim} is asymptotic.
\end{remark}

\begin{proof}[Proof of Proposition~\ref{proponpi/2}]
    Put $n=m+1$ and let $C_K\subset\mathbb{R}^n$ be the cone over $K$. For $K\in\mathcal{C}_m(\pi/2)$, we have $C_K=C_K^*$ and $\nu_m(K)=\gamma_n(C_K)$. \eqref{expo} then follows from Lemma~\ref{prelim}. This, together with  $(\nu_m(\mathbb{B}^m(\pi/4)))^{1/m}\sim\sin(\pi/4)$,
    we obtain the lower bound for $\siglow(\pi/2)$.
\end{proof}

In subsequent subsections, we will adapt this argument to also find a lower bound for widths near the central $\pi/2$.

\subsection{Neighboring widths (below the central)}

The difficulty is that the cone generated by a body of constant width slightly less than $\pi/2$ is no longer self-dual. Nevertheless, its dual cone remains quantitatively close to the original cone, allowing us to modify the Moreau decomposition argument.

For a spherical convex body $L\subset\mathbb{S}^n$ and $t\ge 0$, define its outer parallel body by
\begin{align*}
    L_t:=\{y\in\mathbb{S}^n : d(y, L)\le t\}.
\end{align*}
Also, define
    $T_n(w):=\inf_{K\in\mathcal{C}_n(w)}\nu_n(K)$ for $w\in(0, \pi)$.
We will also use several notations from the previous subsection. Let us first prove important properties of parallel bodies.

\begin{lemma}\label{parallelidentity}
    Let $E\subset\mathbb{S}^n$ be non-empty and closed. Then the following hold.
    \begin{enumerate}
        \item[(a)] If $E\in\mathcal{C}_n(\delta)$, where $\delta\in(0, \pi/2)$, and $0\le s\le\pi/2-\delta$, then $E^{\delta+s}=E_s$.
        \item[(b)] If $0\le s<\pi/2$, then $(E_s)^*=E^{\pi/2-s}$.
        \item[(c)] If $s, u\ge 0$ and $s+u<\pi$, then $(E_s)_u=E_{s+u}$.
    \end{enumerate}
\end{lemma}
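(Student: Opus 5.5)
We prove the three parts separately, starting with (c) and (b), which are pointwise metric statements on $\mathbb S^n$, and then deduce (a) from the fact that a body of constant width $\delta<\pi/2$ is a complete set of diameter $\delta$, which is the equivalence used in the proof of \Cref{constantwidth} via \cite{LM2}.

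For (c), the inclusion $(E_s)_u\subset E_{s+u}$ is the triangle inequality. Given $y$ with $d(y,E)\le s+u$, compactness of $E$ gives a nearest point $e\in E$ with $d(y,e)=r\le s+u<\pi$. Hence $y,e$ are not antipodal, and we may move along the geodesic $[e,y]$ to the point $z$ with $d(e,z)=\min\{s,r\}$. Then $z\in E_s$ and $d(z,y)=r-\min\{s,r\}\le u$.

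For (b), we write $y\in (E_s)^*$ as $d(y,x)\le\pi/2$ for every $x\in E_s$. Suppose $y\notin E^{\pi/2-s}$. Then some $e\in E$ has $d(y,e)>\pi/2-s$. If $d(y,e)<\pi$, move from $e$ along the geodesic away from $y$ by distance $s$, or less if this would overshoot the antipode. This gives $x\in E_s$ with $d(x,y)=\min\{d(y,e)+s,\pi\}>\pi/2$, a contradiction. If $e=-y$, then any $x$ with $d(x,e)\le s$ works. Conversely, if $d(y,e)\le\pi/2-s$ for all $e\in E$, then every $x\in E_s$ has $d(x,y)\le d(x,e)+d(e,y)\le \pi/2$ for a suitable $e$.

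For (a), we use completeness: $E=E^\delta=\bigcap_{y\in E}\mathbb B^n(y,\delta)$, so $\mathrm{diam}(E)=\delta$. The inclusion $E_s\subset E^{\delta+s}$ follows from the triangle inequality. For the reverse, take $y$ with $d(y,E)=t>s$; we must find $e\in E$ with $d(y,e)>\delta+s$. Let $e_0$ be the nearest point of $E$ to $y$. Convexity gives that the hemisphere centered at the unit tangent direction at $e_0$ from $y$ away from $E$ separates $E$ and $y$. The key step, and the main obstacle, is to show that $e_0$ has an opposite point: there is $e_1\in E$ with $d(e_0,e_1)=\delta$ lying on the geodesic through $y$ and $e_0$, on the far side of $e_0$. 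Then $d(y,e_1)=t+\delta>\delta+s$, and since $\delta+t$ may need to be kept below $\pi$, we truncate to $t\le \pi/2-\delta$ when it matters. To get $e_1$, I would use constant width: the supporting hemisphere $H$ of $E$ at $e_0$ with outer normal pointing toward $y$ has width $\delta$. By \cite{LM} (a supporting hemisphere of a constant-width body touching at a point $e_0$ admits a ``opposite'' supporting hemisphere whose lune of thickness $\delta$ has $e_0$ and the opposite touching point $e_1$ joined by a segment orthogonal to both boundaries, of length $\delta$), the point $e_1$ lies on the great circle through $e_0$ orthogonal to $\partial H$, which is the geodesic through $y$. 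If the nearest-point direction is ambiguous because $y$ is far, I would first treat $t$ slightly larger than $s$ and argue monotonically. This is where the restriction $s\le\pi/2-\delta$ ensures $t+\delta\le\pi/2<\pi$, so no wrap-around occurs. As an alternative, if the opposite-point lemma is awkward to cite, I would derive (a) from (b) and duality: $E^*$ has constant width $\pi-\delta$ by \cite{HW}, and for constant-width bodies $E^{\delta}=E$ combined with (b) and (c) should give $E^{\delta+s}=(E_{\pi/2-\delta-s})^{*}$-type identities that reduce to (c).
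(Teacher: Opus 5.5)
Your proposal is correct and follows the paper's proof. Parts (b) and (c) use the same triangle-inequality and geodesic-extension arguments. In (a), your nearest-point and double-normal construction of $e_1$ with $d(y,e_1)=d(y,E)+\delta$ proves the identity $\max_{e\in E}d(x,e)=\delta+d(x,E)$, which the paper simply asserts from completeness.

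Your hedging about large $t$ is unnecessary. If $y\in E^{\delta+s}$, then $d(y,E)\le\delta+s\le\pi/2$, and equality is impossible because $E$ has interior points. So only $t<\pi/2$ occurs, where the separation argument applies and $t+\delta<\pi$ automatically.
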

\begin{proof}
    (a) First let $x\in E_s$. Then there exists $y\in E$ so that $d(x, y)\le s$. Using the triangle inequality, for any $z\in E$, we have $d(x, z)\le d(x, y)+d(y, z)\le s+\delta$. Conversely, if $x\in E^{\delta+s}$, then $\max_{y\in E}d(x, y)\le\delta+s$. If $x\in E$, then it is trivial. If not, since $E$ is complete,  
    \[\max_{y\in E}d(x, y)=\delta+d(x, E),\]
    provided that the right-hand side is $<\pi$. Combining these gives $d(x, E)\le s$.
    \\
    (b) If $x\in E^{\pi/2-s}$ and $y\in E_s$, then choose $z\in E$ with $d(y, z)\le s$. The triangle inequality gives
    $d(x, y)\le d(x, z)+d(y, z)\le\pi/2$, implying $x\in (E_s)^*$. Conversely, if $x\not\in E^{\pi/2-s}$, choose $y\in E$ with $d(x, y)>\pi/2-s$. Extend the geodesic path from $x$ through $y$ past by distance $\le s$. The resulting point $z$ belongs to $E_s$ and satisfies $d(x, z)>\pi/2$. Hence $x\not\in (E_s)^*$.
    \\
    (c) The inclusion $(E_s)_u\subset E_{s+u}$ is easy. For the reverse inclusion, take $x\in E_{s+u}$ and choose $y\in E$ with $d(x, y)\le s+u$, and choose on $[y, x]$ a point $z$ with $d(y, z)\le s$ and $d(x, z)\le u$. Then $z\in E_s$ and $x\in(E_s)_u$.
\end{proof}

  Now we may state the result.


\begin{proposition}\label{finally}
    For $0<w\le \pi/2$, define $R(w):=1+2\cot w$ and $D(w):=\sqrt{1+(R(w))^2}$. Let $\xi(w)$ be the smallest positive root of the quartic equation
    \begin{align*}
        (R(w))^2\xi^4+\xi^2-2(D(w))\xi+1=0.
    \end{align*}
    Then we have $\chi(w):=\liminf_{n\to\infty}(T_n(w))^{1/n}\ge\xi(w)$. Consequently,
    \[\siglow(w)\ge\frac{\xi(w)}{\sin(w/2)}=:\sigma_{\ell}^{\text{G}}(w).\]
\end{proposition}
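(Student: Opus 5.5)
The plan is to run the autocorrelation argument of Lemma~\ref{root2-1} and Lemma~\ref{prelim} for the cone $C:=C_K\subset\mathbb R^{n+1}$ of a body $K\in\mathcal C_n(w)$. Since $C$ is no longer self-dual, the loss has to be paid for in the Gaussian kernel. As before, $\nu_n(K)=\gamma_{n+1}(C)=:\kappa$ and $\int F=\kappa^2$ by \eqref{identityp}.

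\emph{Step 1: dual is a thickening.} Since $K$ is complete of diameter $w$, Lemma~\ref{parallelidentity}(a),(b) give $K^*=K^{\pi/2}=K_{s_0}$ with $s_0:=\pi/2-w$. Hence $C\subset C^*=C_{K_{s_0}}$. Every $v\in C^*$ lies within angle $s_0$ of $C$, so $v=v'+e$ with $v'\in C$ and $\|e\|\le\|v\|\sin s_0$.

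\emph{Step 2: modified Moreau step.} Decompose $z=u-v$ with $u\in C$, $v\in C^*$, $\langle u,v\rangle=0$, and $\|u\|^2+\|v\|^2=s^2$ where $s=\|z\|$. The inclusion $v+C\subset C$ now fails, so I integrate not over $w\in C$ but over a shifted cone $w\in c+C$. Here $c=\lambda\|z\|p_K$, where $p_K$ is the centre of a ball of radius $\rho$ inside $K$; the inradius of a constant-width body is bounded below, giving $\rho\ge w/2$ up to lower-order terms. The parameter $\lambda$ is chosen so that $e+c\in C$ is forced by $\|e\|\le \lambda\|z\|\sin\rho$. Then $u+c+w'$ and $v+c+w'=(v'+c+e)+w'$ both lie in $C$ for all $w'\in C$. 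This gives
\[
F(z)\ge (2\pi)^{-(n+1)}\int_C \exp\Bigl(-\tfrac{\|u+c+w'\|^2+\|v+c+w'\|^2}{2}\Bigr)\,dw'.
\]
Expanding the exponent yields a quadratic form in $(s,r)$ with $r=\|w'\|$. The cross term $\langle u+v+2c,w'\rangle$ is handled as in Lemma~\ref{prelim}: restrict to good directions $\theta\in C\cap\mathbb S^{n}$ with $\langle\eta,\theta\rangle\le\vartheta$, $\eta$ the direction of $u+v+2c$, losing only $q_n(\vartheta)$ as in \eqref{comparison}. I will optimise $\lambda$ (and bound $\|u+v+2c\|\le Rs$) so that the kernel takes the form
\[
\exp\Bigl(-\tfrac{1+R^2}{4}s^2-r^2-\vartheta R\,sr\Bigr),\qquad R=1+2\cot w .
\]
The $\cot w$ should come from the ratio $\sin s_0/\sin\rho$-type geometry. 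This is the step I expect to be the main obstacle: getting exactly the constants $R$ and $D=\sqrt{1+R^2}$ requires choosing the shift direction and the size $\lambda$ carefully. If the inradius shift is too lossy, I would instead shift along $u+v$ or along the projection of $e$, and bound $\|e\|$ via $\cos w$ and $\sin w$ directly.

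\emph{Step 3: asymptotics.} Integrating over $z$ gives $\kappa^2\ge(\kappa-q_n(\vartheta))\widetilde L_n(\vartheta)$. Polar substitution $t=r/s$ as in \eqref{Lnalpha}, followed by the $L^p\to L^\infty$ limit, gives
\[
\widetilde L_n(\vartheta)^{1/n}\to \sup_t\frac{t}{t^2+\vartheta R t+(1+R^2)/4}=\frac{1}{D+\vartheta R}.
\]
An a priori exponential bound $\chi(w)>0$ is needed, analogous to Lemma~\ref{root2-1}; it follows from the same inequality with $\vartheta=1$, or from $\sigma_\ell^{\mathrm{Sch}}$. With it, choose $\sqrt{1-\vartheta^2}<\chi(w)$ so that $q_n/T_n$ decays exponentially, exactly as in \eqref{expodecay}. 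Letting $\vartheta\downarrow\sqrt{1-\chi^2}$ gives
\[
\chi\ge\frac{1}{D+R\sqrt{1-\chi^2}}.
\]
Squaring $1-D\chi=R\chi\sqrt{1-\chi^2}$ and using $D^2-R^2=1$ gives $R^2\chi^4+\chi^2-2D\chi+1\le0$, hence $\chi\ge\xi(w)$.

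\emph{Step 4: conclusion.} Since $\nu_n(\mathbb B^n(w/2))^{1/n}\sim\sin(w/2)$ by \eqref{intsin}, we get $\siglow(w)\ge\xi(w)/\sin(w/2)$.
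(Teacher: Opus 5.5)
\textbf{Gap: the shift in Step 2 cannot give $R=1+2\cot w$.} Steps 1, 3 and 4 are fine, and they match the paper's proof. The problem is the shift $c=\lambda\|z\|p_K$ in a fixed direction. First, the claim $\rho\gtrsim w/2$ is false. Any inscribed ball satisfies $2\rho\le\mathrm{diam}K=w$, so $\rho\le w/2$. In high dimensions $\rho$ can be far smaller; for example $\mathcal{O}_n$ has inradius $\sin^{-1}(1/\sqrt{n+1})\to 0$.

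More importantly, even $\rho=w/2$ would not rescue the constants. Your requirement $\|e\|\le\|c\|\sin\rho$, with $\|e\|$ as large as $\|v\|\cos w$, forces $\|c\|\ge\|v\|\cos w/\sin\rho$. Then $\|u+v+2c\|\le(1+2\cos w/\sin\rho)s$, and $\cos w/\sin\rho\ge\cos w/\sin(w/2)>\cot w$ for $0<w<\pi/2$. The map $\chi\mapsto(D+R\sqrt{1-\chi^2})^{-1}$ with $D=\sqrt{1+R^2}$ decreases pointwise in $R$, so its smallest fixed point strictly decreases. Your argument therefore proves only a strictly weaker bound than $\xi(w)$ for every $w<\pi/2$. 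To get $\cot w=\sin(\pi/2-w)/\sin w$ from a fixed axis, you would need a circular cone of half-angle $w$ inside $C_K$, and none exists. The vague fallbacks you mention are not carried out.

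\textbf{Missing idea: a shift that depends on $v$.} What you need is, for each $v\in C_K^*$, some $a\in C_K$ with $v+a\in C_K$ and $\|a\|\le\|v\|\cot w$. This is the paper's Lemma~\ref{qualitativedual}, proved by iterating the Moreau decomposition:
\begin{itemize}
\item Set $\mathfrak q_0=v$ and $\mathfrak q_j=x_j-\mathfrak q_{j+1}$, with $x_j\in C_K$, $\mathfrak q_{j+1}\in C_K^*$ and $\langle x_j,\mathfrak q_{j+1}\rangle=0$.
\item Since $\mathfrak q_j/\|\mathfrak q_j\|\in K^*=K_\delta$ with $\delta=\pi/2-w$, you get $r_{j+1}\le r_j\sin\delta$ and $\|x_j\|^2=r_j^2-r_{j+1}^2$.
\item Hence $v+\sum_j x_{2j+1}=\sum_j x_{2j}\in C_K$.
\item An induction using $\sin w\sqrt{1-t^2}+t\sin\delta\le 1$ gives $\sum_j\|x_{2j+1}\|\le r_1/\sin w\le\|v\|\cot w$.
\end{itemize}

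There is also a direct geometric route. Let $\hat v=v/\|v\|$, let $x\in K$ be the nearest point to $\hat v$, and let $d=d(\hat v,K)\le\delta$. Completeness gives $y\in K$ with $d(\hat v,y)=w+d$, the fact used in the proof of Lemma~\ref{parallelidentity}(a). This forces $x\in[\hat v,y]$ with $d(x,y)=w$, so $\sin w\,\hat v+\sin d\,y=\sin(w+d)\,x$. Take $a=\|v\|\tfrac{\sin d}{\sin w}\,y$.

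With such an $a$ in place of your $c$, you get $\|u+v+2a\|\le(1+2\cot w)s=Rs$. The parallelogram identity then gives $\|u+a\|^2+\|v+a\|^2\le D^2s^2/2$. Your good-direction restriction, the asymptotics $(D+\vartheta R)^{-1}$, the bootstrap and the final quartic then go through as you wrote them, and this is exactly the paper's argument.
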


\begin{remark}
    Notice that $\xi(\pi/2)=\xi_*$ since $R(\pi/2)=1$ and $D(\pi/2)=\sqrt{2}$.
\end{remark}

We first establish a quantitative replacement for self-duality.

\begin{lemma}\label{qualitativedual}
    Let $0<w\le\pi/2$ and $K\in\mathcal{C}_n(w)$. For every $v\in (C_K)^*$, there exists $a\in C_K$ such that $v+a\in C_K$ and 
    $\|a\|\le\|v\|\cot w$.
\end{lemma}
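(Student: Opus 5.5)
The plan is to work on the sphere with the unit vector $u:=v/\|v\|$ and to produce the point $v+a$ as a positive multiple of a suitable point $k\in K$, with $a$ a nonnegative multiple of another point $b\in K$. If $v=0$ we take $a=0$, so assume $v\neq 0$. Since $C_K^*=C_{K^*}$, we have $u\in K^*=\bigcap_{x\in K}H_x=K^{\pi/2}$. By Lemma~\ref{parallelidentity}(a) with $\delta=w$ and $s=\pi/2-w$, this equals $K_{\pi/2-w}$. Hence there is a nearest point $k\in K$ with $\varphi:=d(u,k)\le \pi/2-w$. If $\varphi=0$ then $u\in K$, so $v\in C_K$, and we take $a=0$. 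From now on assume $\varphi>0$.

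The key geometric step is to find $b\in K$ such that $k$ lies on the geodesic $[u,b]$ with $d(k,b)=w$. Completeness gives this. In the proof of Lemma~\ref{parallelidentity}(a) one uses that, for $x\notin K$,
\[
\max_{y\in K}d(x,y)=w+d(x,K)
\]
(valid since $w+\varphi\le\pi/2<\pi$). Let $b\in K$ attain this maximum for $x=u$, so $d(u,b)=w+\varphi$. The triangle inequality gives $d(u,b)\le d(u,k)+d(k,b)\le \varphi+w$, because $\mathrm{diam}(K)=w$. Equality must therefore hold throughout. Since all distances are below $\pi$, equality in the spherical triangle inequality forces $k\in[u,b]$ and $d(k,b)=w$. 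I expect the main point to check is this step: that the identity $\max_{y\in K}d(u,y)=w+d(u,K)$ for complete bodies is available here, as quoted from \cite{LM2} in the proof of Lemma~\ref{parallelidentity}.

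Now everything happens in the $2$-plane spanned by $u$ and $b$, and $k$ lies on the arc between them. The arc has total length $\varphi+w\le\pi/2$, split as $d(u,k)=\varphi$ and $d(k,b)=w$. The standard interpolation formula on a great circle gives
\[
\sin(\varphi+w)\,k=\sin w\,u+\sin\varphi\, b .
\]
Multiplying by $\|v\|/\sin w>0$ gives
\[
v+a=\frac{\|v\|\sin(\varphi+w)}{\sin w}\,k\in C_K,
\qquad
a:=\frac{\|v\|\sin\varphi}{\sin w}\,b\in C_K .
\]
Finally, $\varphi\le\pi/2-w$ gives $\sin\varphi\le\cos w$. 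Hence $\|a\|=\|v\|\sin\varphi/\sin w\le\|v\|\cot w$, as claimed. The case $w=\pi/2$ is consistent with this: there $K^*=K$, $\varphi=0$, and $a=0$.
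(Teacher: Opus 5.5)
Your proof is correct, and it takes a genuinely different route from the paper.

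The paper never locates a single pair of points. It iterates the Moreau decomposition $\mathfrak{q}_j=x_j-\mathfrak{q}_{j+1}$ with $x_j\in C_K$ and $\mathfrak{q}_{j+1}\in C_K^*$. It uses $K^*=K_{\pi/2-w}$ only to get the geometric decay $\|\mathfrak{q}_{j+1}\|\le\|\mathfrak{q}_j\|\cos w$. It then sets $a=\sum_j x_{2j+1}$, so that $v+a=\sum_j x_{2j}$, and controls $\|a\|$ by an induction on partial sums closed with $\sin w\sqrt{1-t^2}+t\cos w\le 1$.

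You instead write $v+a$ and $a$ directly as nonnegative multiples of the nearest point $k$ and the farthest point $b$. These lie on one great circle through $u$, and the interpolation identity gives $\|a\|=\|v\|\sin d(u,K)/\sin w$ exactly. This is precisely the intermediate bound $r_1/\sin w$ that the paper's series yields.

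Your route is shorter and gives an explicit two-term decomposition. The paper's route is more robust: it uses $K$ only through the inclusion $K^*\subset K_{\pi/2-w}$. So it works for any closed convex cone whose dual lies in the angular $\delta$-neighbourhood of the cone ($\delta<\pi/2$), giving $\|a\|\le\|v\|\tan\delta$. You, by contrast, use completeness a second time, pointwise at $u$.

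That extra use costs nothing here. The nontrivial inequality $\max_{y\in K}d(u,y)\ge w+d(u,K)$ is exactly what the paper's proof of Lemma~\ref{parallelidentity}(a) already asserts from completeness. So you introduce no new dependency. One small correction: the paper does not cite \cite{LM2} at that point, as you suggest. Your separate treatment of $w=\pi/2$ via self-polarity is fine.
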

\begin{proof}
     Put $\mathfrak{q}_0=v$ and apply the Moreau decomposition recursively: for every $j\ge 0$,
    \begin{align}\label{moreaiterate}
        \mathfrak{q}_j=x_j-\mathfrak{q}_{j+1},\quad x_j\in C_K,\quad \mathfrak{q}_{j+1}\in C_K^*, \quad \langle x_j, \mathfrak{q}_{j+1}\rangle=0.
    \end{align}
    Note that Lemma \ref{parallelidentity} gives  \begin{align}\label{longchain}       K^*=K^{\pi/2}=K^{w+\delta}=K_{\delta},  \end{align}
    where $\delta=\pi/2-w$. Write $r_j=\|\mathfrak{q}_j\|$. Note that $x_j$ is the Euclidean projection of $\mathfrak{q}_j$ onto $C_K$ and $\mathfrak{q}_j/r_j\in K^*$. By \eqref{longchain}, we have
    \begin{align}\label{info}
       r_{j+1}=d_E(\mathfrak{q}_j, C_K)= r_jd_E(\mathfrak{q}_j/r_j, C_K)\le r_j \sin(d_E(\mathfrak{q}_j/ r_j, K))\le r_j\sin\delta,
    \end{align}
    where $d_E$ denotes the usual Euclidean $L^2$--distance.
    Also, by orthogonality,
    \begin{align}\label{info2}
         \|x_j\|^2=\|x_j-\mathfrak{q}_{j+1}\|^2-r_{j+1}^2={r_j^2-r_{j+1}^2}.
    \end{align}
    In particular, $r_j\le (\sin\delta)^jr_0$, so the following two series converge absolutely in $C_K$:
    \begin{align*}
        S_o:=\sum_{j\ge 0}x_{2j+1}\quad\text{and}\quad S_e:=\sum_{j\ge 0}x_{2j}.
    \end{align*}
    Iterating \eqref{moreaiterate} gives $\mathfrak{q}_0=x_0-x_1+x_2-\ldots+x_{2m}-\mathfrak{q}_{2m+1}$ and thus, by letting $m\to\infty$, we yield $v+S_o=S_e\in C_K$. We will take $a=S_o$. Next we estimate $\|S_o\|$. For $j, m\ge 0$, put
    \begin{align*}
        S_{j, m}:=\sum_{\ell=0}^m\|x_{j+2\ell}\|.
    \end{align*}
    
    We claim that $S_{j, m}\le r_j/\sin w$. This is done via induction on $m$. For the basic step, when $m=0$, using $\|x_j\|\le r_j$ gives $S_{j, 0}=\|x_j\|\le r_j/\sin w$. Now suppose that the assertion is known for $m-1$. Then $S_{j, m}=\|x_j\|+S_{j+2, m-1}$ and hence
    \[S_{j+2, m-1}\le\frac{r_{j+2}}{\sin w}.\]
    Using \eqref{info} and \eqref{info2}, we therefore obtain
    \begin{align}\label{boundSjm}
        S_{j, m}\le\sqrt{r_j^2-r_{j+1}^2}+\frac{r_{j+2}}{\sin w}\le\sqrt{r_j^2-r_{j+1}^2}+\frac{r_{j+1}\sin\delta}{\sin w}.
    \end{align}
    Indeed, if $r_j=0$, then all terms in \eqref{boundSjm} vanish and we are done. Now suppose $r_j>0$ and put $t:=r_{j+1}/r_j$. We can rewrite \eqref{boundSjm} as
    \begin{align}\label{boundSjm_new}
        \frac{S_{j, m}}{r_j}\le \sqrt{1-t^2}+\frac{t\sin\delta}{\sin w}.
    \end{align} It is thus enough to show that the right-hand side of \eqref{boundSjm_new} is bounded above by $1/\sin w$. This easily follows from Cauchy-Schwarz:
    \[\sin w\sqrt{1-t^2}+t\sin\delta\le \sqrt{\sin^2w+\sin^2\delta}\sqrt{1-t^2+t^2}=1.\]
    This completes the inductive step.
    Finally, the claim gives
    \begin{align*}
        \|S_o\|\le\sum_{j=0}^{\infty}\|x_{2j+1}\|\le\frac{r_1}{\sin w}\le\frac{r_0\sin\delta}{\sin w}=\|v\|\cot w.
    \end{align*}
    The proof is complete.
\end{proof}


\begin{proof}[Proof of Proposition~\ref{finally}]
    Fix $K\in\mathcal{C}_m(w)$ and put $n=m+1$. Define the autocorrelation $F$ as in Lemma~\ref{root2-1}. For $z\in\mathbb{R}^n$, the Moreau decomposition gives $z=u-v$, where $u\in C_K$, $v\in C_K^*$, and $\langle u, v\rangle=0$. By Lemma~\ref{qualitativedual}, there exists $a\in C_K$ such that $v+a\in C_K$ and
    \begin{align}\label{idd}
        \|a\|\le\cot w\|v\|.
    \end{align}
    Put $s=\|z\|$. Observe that $u+a, v+a\in C_K$. Moreover, \eqref{idd} gives \begin{align}\label{ineq1}
        \|u+v+2a\|\le \|u-v\|+2\|a\|\le R(w)s,
    \end{align}
    and by the parallelogram identity,
    \begin{align*}
        \|u+a\|^2+\|v+a\|^2=\frac{\|u-v\|^2+\|u+v+2a\|^2}{2}\le\frac{(D(w)s)^2}{2}.
    \end{align*}
    For every $q\in C_K$, we have  $u+a+q, v+a+q\in C_K$. Note the square expansion \[\|u+a+q\|^2+\|v+a+q\|^2=\|u+a\|^2+\|v+a\|^2+2\|q\|^2+2\langle u+v+2a, q\rangle.\] Hence, analogous to \eqref{keyineq},
    \begin{align*}
        F(z)&\ge\int_{u+a+C_K}\phi_n(x)\phi_n(x-z)dx
        \ge \frac{e^{-(D(w))^2s^2/4}}{(2\pi)^n}\int_{C_K}e^{-\|q\|^2-\langle u+v+2a, q\rangle}\,dq.
    \end{align*}
    
    We now repeat the good-direction argument from Lemma~\ref{prelim}. For fixed $0<\vartheta<1$ and $u+v+2a\ne 0$, restrict the $q$-integration to directions satisfying
    \[\left\langle \frac{u+v+2a}{\|u+v+2a\|}, \frac{q}{\|q\|}\right\rangle\le \vartheta.\]
    By \eqref{ineq1}, we have $\langle u+v+2a, q\rangle\le\vartheta R(w)s\|q\|$. The same polar-coordinate calculation as before therefore yields
    \begin{align}\label{modified}
        \kappa^2\ge(\kappa-q_n(\vartheta))L_{n, w}(\vartheta),
    \end{align}
    where
    \begin{align}\label{Lnw}
    \begin{split}
        L_{n, w}(\vartheta) &=\frac{\Gamma(\frac{n+1}{2})}{\sqrt{\pi}\Gamma(\frac{n}{2})}\int_0^{\infty}\left(\frac{t}{t^2+\vartheta R(w)t+(D(w))^2/4}\right)^n\,\frac{dt}{t}\\
         &=\frac{\Gamma(\frac{n+1}{2})}{\sqrt{\pi}\Gamma(\frac{n}{2})}\int_{\mathbb{R}}\left(\vartheta R(w)+D(w)\cosh x\right)^{-n}\,dx.
        \end{split}
    \end{align}
    Note that the replacements from \eqref{Lnalpha} are ${s^2}/{2}\mapsto {(D(w))^2s^2}/{4}$ and $\vartheta sr\mapsto \vartheta R(w)sr$.
    Analogously, we also have
    \begin{align}\label{asymptoticqL}
        (q_n(\vartheta))^{1/n}\sim\sqrt{1-\vartheta^2}\quad\text{and}\quad (L_{n, w}(\vartheta))^{1/n}\sim\frac{1}{D(w)+\vartheta R(w)}.
    \end{align}

    Taking $\vartheta=1$ in \eqref{modified} gives the initial bound $\chi(w)\ge\chi_0$ where $\chi_0:=(D(w)+R(w))^{-1}$.
    We may now repeat the bootstrap from Lemma~\ref{prelim}. If $\chi(w)\ge\chi_j>0$, then choose $\vartheta>\sqrt{1-\chi_j^2}$. So $q_n(\vartheta)/T_{n-1}(w)\to 0$ exponentially, and \eqref{modified}, \eqref{Lnw}, \eqref{asymptoticqL} gives
    \begin{align*}
        \chi(w)\ge\frac{1}{D(w)+\vartheta R(w)}.
    \end{align*}
    Letting $\vartheta\downarrow \sqrt{1-\chi_j^2}$, we obtain $\chi(w)\ge \chi_{j+1}$ where
    \begin{align*}
         \chi_{j+1}:=\frac{1}{D(w)+R(w)\sqrt{1-\chi_j^2}}.
    \end{align*}
    Using induction, one can show that the sequence $\{\chi_j\}$ increases to the smallest positive fixed point of this map, namely $\xi(w)$. By further algebraic manipulations,  we are done.
\end{proof}



\subsection{Neighboring widths (above the central)} Actually,  Lemma~\ref{prelim} is also applicable for $w>\pi/2$ with an immediate extension. However, we can improve our bound as shown below. The strategy is to pass to the polar body $K^*$, enlarge it by a suitable parallel-body construction to obtain a body $M\in\mathcal{C}_n(\pi/2)$, and write $K=M_t$. The spherical isoperimetric inequality then yields the desired lower bound for $\nu_n(K)$.

\begin{proposition}\label{lowerslightlymorethanpi/2}
    Let $n\ge 2$ and $w\in(\pi/2, \pi)$. Set $t:=(w-\pi/2)/{2}$. Suppose $a_n\in(0, 1)$ has the property that
    $\nu_n(L)\ge a_n$
    for every $L\in\mathcal{C}_n(\pi/2)$. Let $\theta_n\in(0, \pi)$ be determined by $\nu_n(\mathbb{B}^n(\theta_n))=a_n$.
    Then every $K\in\mathcal{C}_n(w)$ satisfies
    \begin{align}\label{eq35}
        \nu_n(K)\ge\nu_n(\mathbb{B}^n(\min\{\theta_n+t, \pi\})).
    \end{align}
    In particular, if $\liminf_{n\to\infty}a_{n}^{1/n}\ge \mathfrak{a}$ for some $\mathfrak{a}\in(0, 1)$, then \[\liminf_{n\to\infty}(T_n(w))^{1/n}\ge \Omega_t(\mathfrak{a}):=\begin{cases}
        \sin(\sin^{-1}\mathfrak{a}+t) &\text{if } \sin^{-1}\mathfrak{a}+t<\pi/2,\\
        1 &\text{if } \sin^{-1}\mathfrak{a}+t\ge\pi/2.
    \end{cases}\]
\end{proposition}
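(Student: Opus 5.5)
The plan follows the strategy announced before the statement. Fix $K\in\mathcal{C}_n(w)$ with $w\in(\pi/2,\pi)$. By \cite{HW}, its spherical dual $K^*$ is a body of constant width $w':=\pi-w\in(0,\pi/2)$. Put $\delta:=w'$ and $s:=\pi/2-w'-t=t$, since $\pi/2-w'=w-\pi/2=2t$.

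The first step is to produce a body of width $\pi/2$ from $K^*$. Define $M:=(K^*)_t$. By Lemma~\ref{parallelidentity}(a), applied with $E=K^*$ and $0\le t\le\pi/2-\delta$, we get $M=(K^*)^{\delta+t}$. I then claim $M\in\mathcal{C}_n(\pi/2)$. The width of a parallel body of a constant-width body grows by exactly $2t$, so its width is $w'+2t=w'+(w-\pi/2)=\pi/2$. To justify this I would check directly that $M$ is self-dual, $M^*=M$:
\[M^*=((K^*)_t)^*=(K^*)^{\pi/2-t}=(K^*)^{w'+t}=(K^*)_t=M,\]
using Lemma~\ref{parallelidentity}(b) and then (a) again, since $\pi/2-t=w'+t$. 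A self-polar spherical convex body has constant width $\pi/2$, by \cite{HW}, since constant width $\pi/2$ is equivalent to $K=K^*$.

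The second step is to recover $K$ from $M$. Again by (b), $M_t^*=M^{\pi/2-t}$, and we also have $M^*=M=(K^*)_t$. Alternatively, I would compute directly: $K=(K^*)^*=(K^*)^{\pi/2}$ (for a closed convex set $E$, $E^*=E^{\pi/2}$), and $(K^*)^{\pi/2}=(K^*)^{w'+2t}=(K^*)_{2t}$ by (a), because $2t\le\pi/2-w'$ holds with equality. Then (c) gives $(K^*)_{2t}=((K^*)_t)_t=M_t$. Hence $K=M_t$. The hypothesis of (a) is borderline here, since $s=\pi/2-\delta$ exactly, but it is allowed.

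The third step uses the spherical isoperimetric inequality in its parallel-body form (L\'evy). Since $\nu_n(M)\ge a_n=\nu_n(\mathbb{B}^n(\theta_n))$, we get
\[\nu_n(M_t)\ge\nu_n\big(\mathbb{B}^n(\theta_n)_t\big)=\nu_n\big(\mathbb{B}^n(\min\{\theta_n+t,\pi\})\big),\]
which is \eqref{eq35}.

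For the asymptotic statement, $\nu_n(\mathbb{B}^n(\theta))=\omega_n s_n(\theta)/\mu_n(\mathbb{S}^n)$, and for $\theta<\pi/2$ we have $\nu_n(\mathbb{B}^n(\theta))^{1/n}\sim\sin\theta$ by \eqref{intsin}. This relation should also hold uniformly enough along $\theta_n$ to conclude that $\liminf a_n^{1/n}\ge\mathfrak a$ forces $\liminf\sin\theta_n\ge\mathfrak a$. I would argue by monotonicity: for any $\mathfrak a'<\mathfrak a$ with $\theta':=\sin^{-1}\mathfrak a'$, eventually $a_n\ge\nu_n(\mathbb{B}^n(\theta'))$, which gives $\theta_n\ge\theta'$. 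Then $T_n(w)\ge\nu_n(\mathbb{B}^n(\min\{\theta'+t,\pi\}))$. If $\theta'+t<\pi/2$, the $n$-th root tends to $\sin(\theta'+t)$. If $\theta'+t\ge\pi/2$, the ball contains a hemisphere, so the measure is at least $1/2$ and the root tends to $1$. Letting $\mathfrak a'\uparrow\mathfrak a$ gives $\Omega_t(\mathfrak a)$.

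The main obstacle is making the identities $K=M_t$ and $M\in\mathcal{C}_n(\pi/2)$ rigorous. The key points are the duality facts $E^*=E^{\pi/2}$ and that self-polarity is equivalent to constant width $\pi/2$, together with the completeness of $K^*$ needed for Lemma~\ref{parallelidentity}(a). These should follow from \cite{HW,LM2}.
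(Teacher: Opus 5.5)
Your proposal is correct and follows essentially the same route as the paper: set $M:=(K^*)_t$, get $M^*=M$ from Lemma~\ref{parallelidentity}(b),(a), recover $K=(K^*)^{\pi/2}=(K^*)_{2t}=((K^*)_t)_t=M_t$ via (a),(c), and conclude with L\'evy's isoperimetric inequality and the cap asymptotics $\nu_n(\mathbb{B}^n(r))^{1/n}\sim\sin r$. Your monotonicity argument with $\mathfrak a'<\mathfrak a$ in the last step is slightly more careful than the paper's assertion $\theta_n=\sin^{-1}\mathfrak a+o(1)$, since the hypothesis only gives the one-sided bound $\theta_n\ge\sin^{-1}\mathfrak a-o(1)$, which is exactly what you use.
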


\begin{remark}
    In fact, since the spherical orthant has normalized measure $2^{-(n+1)}$, any admissible central-width lower bound necessarily satisfies $\mathfrak{a}\le 1/2$. Consequently, the first case in the piecewise definition of $\Omega_t$ always applies.
\end{remark}

 We can then utilize Proposition \ref{lowerslightlymorethanpi/2} to our interest in spherical bodies of constant width, with $a_n:=T_n$, $\liminf_{n\to\infty}(a_n)^{1/n}\ge \xi_*$, as $\xi_*<1/2$. This gives
 \begin{align*}
     \siglow(w)&\ge \frac{1}{\sin(w/2)}\sin\left(\sin^{-1}\xi_*+\frac{w-\pi/2}{2}\right)
     \\ &= \frac{1}{\sin(w/2)}\left(\xi_*\cos\frac{w-\pi/2}{2}+\sqrt{1-\xi_*^2}\sin\frac{w-\pi/2}{2}\right)=:\sigma_{\ell}^{\text{G}}(w).
 \end{align*}

 \begin{proof}[Proof of Proposition \ref{lowerslightlymorethanpi/2}]
     Fix $K\in\mathcal{C}_n(w)$ and put $A:=K^*$, $\delta:=\pi-w$. Set $M:=A_t$. We claim that $M\in\mathcal{C}_n(\pi/2)$. Indeed, since $\delta+2t=\pi/2$, Lemma \ref{parallelidentity} gives
     \begin{align*}
         M^*=(A_t)^*=A^{\pi/2-t}=A^{\delta+t}=A_t=M,
     \end{align*}
     implying $M$ is self-dual.
     As $\mathrm{diam}(M)=\pi/2$, it is a complete spherical body, implying the claim. In addition, Lemma \ref{parallelidentity} also gives the exact reconstruction of $K$ from $M$:
     \begin{align}\label{KM}
         K=A^*=A^{\pi/2}=A_{\pi/2-\delta}=A_{2t}=(A_t)_t=M_t.
     \end{align}

     Next, by the hypothesis, $\nu_n(M)\ge a_n$. Let $r_n$ be chosen so that $\nu_n(M)=\nu_n(\mathbb{B}^n(r_n))$. Then $r_n\ge\theta_n$. Using \eqref{KM} and \cite[Section~1.1, equation~(1.1)]{Ledoux1999}, we have
     \begin{align*}
         \nu_n(K)=\nu_n(M_t)\ge\nu_n((\mathbb{B}^n(r_n))_t)\ge\nu_n(\mathbb{B}^n(\min\{\theta_n+t, \pi\})),
     \end{align*}
     which is \eqref{eq35}.
     The next result then follows from the fact that $(\nu_n(\mathbb{B}^n(r)))^{1/n}\sim\sin r$. We have $\theta_n=\sin^{-1}\mathfrak{a}+o(1)$ whenever $\sin^{-1}\mathfrak{a}+t<\pi/2$. If $\sin^{-1}\mathfrak{a}+t\ge\pi/2$, the cap in \eqref{eq35} has measure bounded below by $1/2+o(1)$ whose $n$th root tends to $1$.
 \end{proof}

\begin{remark}
     Numerically, $\sigma_{\ell}^{\text{G}}$ beats $\sigma_{\ell}^{\text{Sch}}$ when
     \[0.44494\ldots\pi\le w\le  0.54424\ldots\pi.\]
 \end{remark}

In the end, combining the preceding results gives the lower bound $\sigma_{\ell}$ as in Theorem \ref{thm:main-intro}.

\section*{Acknowledgments}

A.H. was supported by the University of Manitoba's Faculty of Science Undergraduate Summer Research Award and in part by NSERC of Canada Discovery Grant RGPIN-2026-06488. A.P. was supported by NSERC of Canada Discovery Grant RGPIN-2026-06488. C.S. was supported by the Mitacs Globalink Research Internship program. He also thanks Dr.~Yat Ming Chan and Dr.~Jianghua Lu for their valuable teaching in geometry.

\section*{AI contribution disclosure}

Exploration with ChatGPT suggested the key argument in \Cref{sec:lower_new}, which the authors subsequently reconstructed and checked in detail. ChatGPT also assisted with the proof of Proposition~\ref{constantwidth}, specifically with the estimate $\mathrm{diam}(M)\leq w$, and with the choice of the point $z$ in Lemma~\ref{bounddual}. Qwen supplied an initial proof of Proposition~\ref{cond2redundant}. The authors independently validated and finalized all arguments before incorporating them into the manuscript.



\end{document}